\documentclass[12pt,reqno]{article}

\usepackage[usenames]{color}
\usepackage{amssymb}
\usepackage{amsmath}
\usepackage{amsthm}
\usepackage{amsfonts}
\usepackage{amscd}
\usepackage{graphicx}

\usepackage[colorlinks=true,
linkcolor=webgreen,
filecolor=webbrown,
citecolor=webgreen]{hyperref}

\definecolor{webgreen}{rgb}{0,.5,0}
\definecolor{webbrown}{rgb}{.6,0,0}

\usepackage{color}
\usepackage{fullpage}
\usepackage{float}

\usepackage{graphics}
\usepackage{latexsym}
\usepackage{epsf}
\usepackage{breakurl}

\def\gcd{\mathrm{gcd}}
\def\rank{\mathrm{rank}}
\def\modd#1 #2{#1\ \mbox{\rm (mod}\ #2\mbox{\rm )}}

\begin{document}

\begin{center}
\epsfxsize=4in

\end{center}

\theoremstyle{plain}
\newtheorem{theorem}{Theorem}
\newtheorem{corollary}[theorem]{Corollary}
\newtheorem{lemma}[theorem]{Lemma}
\newtheorem{proposition}[theorem]{Proposition}

\theoremstyle{definition}
\newtheorem{definition}[theorem]{Definition}
\newtheorem{example}[theorem]{Example}
\newtheorem{conjecture}[theorem]{Conjecture}

\theoremstyle{remark}
\newtheorem{remark}[theorem]{Remark}

\begin{center}
\vskip 1cm{\LARGE\bf  Lattice points close to a smooth curve and squarefull numbers in short intervals. II
}

\vskip 1cm
\large
Ognian Trifonov\\
University of South Carolina\\
Department of Mathematics \\
Columbia, SC 29208 \\
\href{mailto:trifonov@math.sc.edu}{\tt trifonov@math.sc.edu} \\
\end{center}

\vskip .2 in

\begin{abstract}
By applying recent estimates on the number of integer solutions of ternary quadratic forms in boxes  due to  Browning and Heath-Brown, we obtain new estimates for the number of lattice points close to a curve. This leads to a new result on the distribution of squarefull numbers in short intervals.
\end{abstract}

\section{Introduction}
A squarefull number is a positive integer $n$ such that if a prime $p$ divides $n$, then $p^2$ also divides $n$. Let $Q(x)$ be the number of squarefull numbers which do not exceed $x$. 

In 1958, Bateman and Grosswald \cite{BatGro} proved

\begin{equation}\label{asympt}
Q(x) \sim \frac{\zeta(3/2)}{\zeta(3)} x^{1/2} + \frac{\zeta(2/3)}{\zeta(2)}  x^{1/3} +  o\left (x^{1/6} \right ).
\end{equation}

Later on, P.~Shiu \cite{PShiu1980} showed that there exist infinitely many positive integers $n$ such that the interval $(n^2,(n+1)^2)$ does not contain squarefull numbers. Thus, he answered the question of how 
large  the gaps between squarefull numbers can be. 

Concerning the distribution of squarefull numbers, a natural question is, for what numbers $\theta \in [0,1/2)$ the following equation holds?
\begin{equation}\label{asympsh}
Q \left ( x + x^{1/2+\theta} \right ) - Q(x) \sim \frac{\zeta(3/2)}{2\zeta(3)} x^{\theta} 
\end{equation}.

The above result of Bateman and Grosswald shows that \eqref{asympsh} holds for all $\theta \in [1/6,1/2]$. P. Shiu's result shows that \eqref{asympsh} does not hold for $\theta = 0$. 
Over the years, smaller and smaller values of $\theta_0$ such that \eqref{asympsh} holds for all $\theta \in (\theta_0, 1/2)$ were obtained in a number of papers, P.~Shiu \cite{PShiu1984} (1984) $\theta_0=0.1526$, 
P.~G.~Schmidt \cite{PGSchmidt1986} (1986) $\theta_0=0.1507\ldots$, C.~H.~Jia \cite{CHJia} (1987) $\theta_0=0.149\ldots$, P.~G.~Schmidt \cite{PGSchmidt1988} (1988) $\theta_0=0.1428\ldots$, 
H.~Q.~Liu \cite{HQLiu} $\theta_0=0.1425\ldots$, D.~R.~Heath-Brown 
\cite{DRHeath-Brown1991} (1991) $\theta_0 = 0.1318\ldots$, M.~Filaseta and O.~Trifonov \cite{FilTri} (1994) $\theta_0=0.1282\ldots$, M.~N.~Huxley and O.~Trifonov \cite{HuxTri} (1996) $\theta_0=1/8=0.125$,
O.~Trifonov \cite{OTrifonov2002} (2002) $\theta_0 = 19/154 = 0.1233\ldots$.

We improve the above results further by showing:

\begin{theorem} \label{newtheta}
Let $\epsilon > 0$ and $\theta = \frac{8}{65} + \epsilon$. Then,
$$Q \left ( x + x^{1/2+\theta} \right ) - Q(x) \sim \frac{\zeta(3/2)}{2\zeta(3)} x^{\theta}.$$

\end{theorem}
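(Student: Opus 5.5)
The plan is the standard reduction to counting lattice points close to the curves $y = (N/x^3)^{1/2}$, with the new ingredient being the Browning--Heath-Brown bound for ternary quadratic forms. Every squarefull $n$ is written uniquely as $n = a^2 b^3$ with $b$ squarefree. Put $H = x^{1/2+\theta}$. Then
\[
Q(x+H)-Q(x) = \sum_{b\ \mathrm{squarefree}} \#\Bigl\{a : \sqrt{x/b^3} < a \le \sqrt{(x+H)/b^3}\Bigr\}.
\]
For each $b$ the interval has length $\frac{H}{2x^{1/2}b^{3/2}}\bigl(1+O(H/x)\bigr)$. Fix a cutoff $B = x^{\beta}$. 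For $b \le B$ we replace each count by the interval length plus $O(1)$. Using $\sum_{b\ \mathrm{sqfree}} b^{-3/2} = \zeta(3/2)/\zeta(3)$, this produces the main term $\frac{\zeta(3/2)}{2\zeta(3)}x^\theta$, with error $O(B + x^\theta B^{-1/2})$. This is $o(x^\theta)$ as long as $\beta < \theta$. The whole difficulty is therefore in the range $b > B$, where the intervals are shorter than $1$ and we must show that the total number of pairs $(a,b)$ is $o(x^\theta)$.

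For $b > B$ we split dyadically, $b \sim T$, $a \sim A$, with $A^2T^3 \asymp x$ and $T$ ranging up to $x^{1/3}$. A pair in the range satisfies $\| (x/b^3)^{1/2} \| \ll \delta$, where $\delta \asymp H/(x^{1/2}T^{3/2})$. Equivalently, $(b,a)$ is a lattice point within distance $\delta$ of the curve $f(t) = (x/t^3)^{1/2}$ on $[T,2T]$, and $f$ has derivatives $f^{(j)} \asymp A T^{-j}$. The trivial bound for such a count is $T\delta + (\text{error})$. The main term $T\delta$ summed over dyadic $T > B$ is $\ll x^\theta B^{-1/2}$, which is already fine. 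So we need error-term bounds for the number $R(T,\delta)$ of lattice points near the curve, with $R(T,\delta) = o(x^\theta)$ uniformly in $T$. For small $T$ (below roughly $x^{1/5}$) this should come from the existing estimates: the first and second divided-difference methods of Huxley and Trifonov, as used in \cite{OTrifonov2002}, together with exponential sums for the smallest $T$. I would take these as known inputs.

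The new step concerns the middle range of $T$. Following the divided-difference philosophy, we take three or four close lattice points and form the determinant that vanishes for points on a conic. Either many points lie on a common conic, or the points force a nonzero integer determinant whose size contradicts closeness. In the conic case, the close points on short arcs lie on the curve $y^2 t^3 \approx x$ locally approximated by conics with bounded integer coefficients. Counting lattice points on such a conic inside a box is a ternary quadratic form problem, $Q(u,v,w)=0$ in a box. Here Browning--Heath-Brown give roughly $\ll (V/\Delta^{1/3})^{\ldots}$-type savings: about $(1 + V_1V_2V_3/\Delta)^{1/3}$ times small factors, beating the bound obtained from simply counting points per arc. We sum over the possible conics, whose number is controlled by the size of the coefficients, each coefficient being bounded via the derivatives of $f$ on arcs of length $L$. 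Then we optimize $L$ against $\delta$ and $T$. The final step is bookkeeping: balance the new middle-range bound against the older small-$T$ bounds and the constraint $\beta < \theta$. The optimum should make every range $o(x^\theta)$ exactly when $\theta > 8/65$.

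I expect the main obstacle to be the middle-range conic count. This involves setting up the correct ternary form from the Taylor data on each arc, bounding its determinant from below (excluding degenerate forms, which correspond to lines and are handled by the first divided difference), and getting the arc length optimization to close at $8/65$.
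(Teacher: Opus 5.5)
Your reduction is essentially the paper's. After the main term, everything comes down to counting $k\sim M$ with $\|x^{1/2}k^{-3/2}\|<\delta_1=x^\theta M^{-3/2}$; the paper gets this from Lemma \ref{red}. That lemma also splits off the pairs whose square part satisfies $m\le x^{1/5}$. For $b>x^{1/5}$ you should likewise swap roles and count $a$, as the paper's $R_1$ does, rather than run the curve in $b$ up to $x^{1/3}$.

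The genuine gap is the new lattice-point bound itself. You never produce an estimate strong enough to beat $19/154$, and the mechanism you sketch would not supply one. Your dichotomy is the classical determinant-method scheme: either an integer determinant is forced below $1$ (a contradiction), or many points lie on one conic, whose integer points you count as zeros of a ternary form in a box. The conic branch is the analogue of the paper's major arcs. That is only a secondary term, which the paper disposes of with Huxley--Sargos (Lemma \ref{Rhest}). Counting points on a single conic more cleverly cannot improve the main term, which is governed by how many short arcs there are. Your final ``the optimum should close at $8/65$'' is therefore an assertion, not an argument.

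What the paper does instead (Theorem \ref{newest}) is Swinnerton-Dyer's idea lifted to third divided differences.
\begin{enumerate}
\item The set is made strictly convex and well separated, and long parabolic major arcs are discarded.
\item Lemma \ref{local} then supplies quintuples with no four points on a parabola. So the numerators $D_0,\dots,D_4$ of the third divided differences are \emph{nonzero} integers, but they are not small: they are only bounded by $A^6T/M^3+4\delta A^3$, where $A$ bounds the span $x_5-x_1$.
\item No contradiction is available, so one counts configurations instead. The $D_j$ satisfy $\sum_{j=0}^4(-1)^jx_{j+1}^kD_j=0$ for $k=0,1,2$. Passing to the gaps $z_j=x_{j+1}-x_1$ and eliminating $z_4$ gives the ternary form \eqref{tf} in $(z_1,z_2,z_3)$, with coefficients built from the $D_j$ and determinant $|D_0D_1D_2D_3|D_4^2$.
\item Heath-Brown's bound is applied for each fixed $(D_1,\dots,D_4)$ and summed, giving Lemma \ref{nond}. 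This uses the factor $(\Delta_0^\#)^{3/2}$ that you omitted, which is controlled on average by Lemma \ref{ser}.
\item Optimizing $A$ gives the term $M^{12/25}T^{4/25}=x^{2/25}M^{6/25}$, with $T=x^{1/2}M^{-3/2}$.
\end{enumerate}
So the quadratic form counts configurations of gaps globally, not points on a curve.

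The value $8/65$ then comes out concretely. The term above and the Huxley--Trifonov term $T^{3/10}M^{3/10}=x^{3/20}M^{-3/20}$ of Lemma \ref{lgM} both equal $x^{8/65}$ at $M=x^{7/39}$. The new bound is used on $(x^{0.158},x^{7/39}]$, which lies inside the range $M<x^{1/5}$ you assigned to existing estimates. Lemma \ref{lgM} is used on $(x^{7/39},x^{1/5}]$.
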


Note that $\frac{8}{65} = 0.12307\ldots < \frac{19}{154} = 0.12337\ldots $.

The key ingredients in most of the above results have been estimates for the number of lattice points close to the curve $a^2b^3 = x$. 
The following theorem contains a new estimate for the number of lattice points close to a smooth curve which allows us to prove Theorem~\ref{newtheta}.

\begin{theorem} \label{newest}
Let $1 \leq C \leq M \leq T \leq M^2$, let $f: [M,2M] \to R$ have a continuous third derivative and ${\displaystyle \frac{T}{CM^j} \leq |f^{(j)} (x)| \leq \frac{CT}{M^j} }$ for $j=2,3$ and all $x \in [M,2M]$.  Let  $0 \leq \delta < 1/2$ and $\delta \leq (CM)^{-1/2}$. 
Define $S(f,\delta)$ to be the set of all integer points $(x,y)$ such that $x \in [M,2M]$ and $|f(x)-y| \leq \delta$.

Then, for every $\epsilon > 0$ there exists $c(\epsilon ) > 0$ such that  

$$
\begin{array}{lcl}
|S(f, \delta)| &  \ll  & c(\epsilon  ) M^\epsilon \left ( M^{12/25}T^{4/25} + M^{1/2}T^{1/7} + M^{7/8} \delta^{1/4}+M^{12/13} \delta^{4/13}+ M^{2/7}T^{2/7} \right )\\
 & & + M^{1/2}T\delta^{5/2}+M^{1/2}T^{1/4}\delta^{1/4} + TM^{-1}.
 \end{array}
$$
\end{theorem}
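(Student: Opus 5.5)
Our plan follows the Huxley--Trifonov method of "major arcs": the lattice points of $S(f,\delta)$ are grouped by the conic arcs through them, and the arcs carrying many points are counted with a point-counting bound for quadratic forms in boxes.

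\emph{Step 1: cover $[M,2M]$ by short intervals.} We split $[M,2M]$ into intervals of length $q$, a parameter to be chosen at the end. The points of $S(f,\delta)$ lying in one such interval are then handled separately.

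\emph{Step 2: separate minor and major arcs.} Take five points of $S(f,\delta)$ that are consecutive in $x$ within one interval. Either they do not lie on a common conic, or they do.
\begin{itemize}
\item If they do not, the $6\times 6$ determinant with rows $(1,x,y,x^2,xy,y^2)$ is a nonzero integer. On the other hand, Taylor expansion of $f$, using the bounds on $f''$ and $f'''$, together with the size restriction $\delta \le (CM)^{-1/2}$, shows that this determinant is small. Hence the five points must be spread out, and these configurations contribute terms of the shape $M^{12/25}T^{4/25}$ and $M^{2/7}T^{2/7}$. The analogous argument with triples of points (second divided differences) yields the contributions $M^{1/2}T\delta^{5/2}$ and $M^{1/2}T^{1/4}\delta^{1/4}$.
\item If they do, the points lie on a "major arc": a rational conic $Ax^2+Bxy+Cy^2+Dx+Ey+F=0$ with integer coefficients. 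By Taylor-approximating $f$ by a parabola on the interval, these coefficients can be bounded in terms of $T$, $M$ and $q$.
\end{itemize}

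\emph{Step 3: count points on major arcs.} This is the new ingredient. For each admissible conic, homogenize it into a ternary quadratic form and apply the Browning--Heath-Brown bound for the number of integer zeros of a ternary form in a box. That bound is $\ll (\text{box volume})^{1/3+\epsilon}/\det^{1/3}$ plus lower-order terms, improving the trivial $O(q)$ count per conic. We then sum over the admissible conics. The determinant of each form is controlled through the discriminant of the osculating parabola, which is tied to $f''$. We count conics by counting the rational approximations $(a/q_1, b/q_1)$ to $(f'(x_0), f''(x_0)/2)$, organized as in the earlier paper \cite{OTrifonov2002}. Summing over conics should produce the terms $M^{1/2}T^{1/7}$, $M^{7/8}\delta^{1/4}$ and $M^{12/13}\delta^{4/13}$.

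\emph{Step 4: trivial contributions and optimization.} The term $TM^{-1}$ is the trivial contribution when the curve is very steep, and it absorbs the endpoint effects. Finally we choose $q$, and the thresholds separating the cases, to balance the terms. The $M^{\epsilon}$ factor comes from divisor-type bounds and from Browning--Heath-Brown.

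\emph{Main obstacle.} We expect Step 3 to be the hardest. There we must control simultaneously how many conics are relevant and the determinant of each, so that the gain from Browning--Heath-Brown is not lost in the summation over conics. Degenerate conics, where the determinant vanishes and the bound fails, need separate treatment. We would handle them as lines or line pairs, which contribute at most $O(1)$ points each near a curve with $f''\neq 0$.
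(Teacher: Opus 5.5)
There is a genuine gap. The proposal never sets up the count that actually produces the bound, and its one concrete dichotomy is vacuous.

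\textbf{The conic dichotomy.} In Step~2 you split according to whether five consecutive points lie on a common conic. But \emph{any} five points in the plane lie on a (possibly degenerate) conic: a conic has six coefficients, and five points impose only five linear conditions. The $6\times 6$ determinant with rows $(1,x,y,x^2,xy,y^2)$ needs six points. Even then you give no estimate for it. Such conic determinants are naturally controlled by derivatives of $f$ beyond the third, which the hypotheses do not bound.

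What matches the hypotheses on $f''$ and $f'''$ is the paper's dichotomy for \emph{parabolas} $y=ax^2+bx+c$. The paper proceeds as follows.
\begin{enumerate}
\item It makes $S_1$ strictly convex (Lemma~\ref{conv}) and then spaced by $E$ (Lemma~\ref{mindist}). This spacing step is where $M^{1/2}T\delta^{5/2}$ comes from.
\item Blocks of $4H$ consecutive points with $H$ of them on one parabola are long major arcs. They are disposed of by the Huxley--Sargos bound (Lemma~\ref{Rhest}), and their contribution is absorbed into other terms.
\item In every other block, Konyagin's Lemma~\ref{local} supplies a quintuple no four of whose points lie on a parabola.
\item Hence the five $4\times 4$ determinants $D_0,\dots,D_4$ (numerators of third divided differences) are nonzero integers with $|D_j|\le B=A^6T/M^3+4\delta A^3$.
\end{enumerate}

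\textbf{Where the ternary-form bound is used.} The Browning--Heath-Brown input is used not to count lattice points on individual rational conics, as in your Step~3, but to count the possible \emph{shapes} of these quintuples.
\begin{enumerate}
\item With $z_j=x_{j+1}-x_1$ one has $\sum_j(-1)^jz_j^kD_j=0$ for $k=0,1,2$.
\item Eliminating $z_4$ gives the ternary form \eqref{tf} in $(z_1,z_2,z_3)$, with determinant $|D_0D_1D_2D_3|D_4^2$.
\item Lemma~\ref{tern}, summed over the $D_j$ with the help of Lemma~\ref{ser}, gives $\ll A^{1+\epsilon}B^{2+\epsilon}/d^2+B^{4+\epsilon}/d^4$ solutions of \eqref{sys}.
\item This is combined with the preimage bound of Lemma~\ref{preim} and with the $\ll M/A$ quintuples of length $>A$, and then $A$ is optimized.
\end{enumerate}
The optimization yields all four main terms from this single count:
\begin{itemize}
\item $M^{1/2}T^{1/7}$ and $M^{7/8}\delta^{1/4}$ come from the $AB^2$ term;
\item $M^{12/25}T^{4/25}$ and $M^{12/13}\delta^{4/13}$ come from the $B^4$ term;
\item $M^{2/7}T^{2/7}$, $M^{1/2}T^{1/4}\delta^{1/4}$ and $T/M$ come from the constraints \eqref{Abound} on $A$.
\end{itemize}
Your assignment of these terms to minor arcs, triples, conic sums and ``steep curves'' is not backed by any computation. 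The phrase ``summing over conics should produce'' them marks exactly the step that is missing.

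\textbf{Degenerate conics.} A line is not automatically worth $O(1)$ points. It can stay within $\delta$ of the curve over an $x$-interval of length about $(\delta M^2/T)^{1/2}$. The paper gets at most two points per line from the strict convexity supplied by Lemma~\ref{conv}.
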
 

Theorem \ref{newest} improves on previous results when $T= M^\alpha$ with $\alpha$ close to $3/2$ and $\delta $ sufficiently small.
For example, one gets the following corollary. 

\begin{corollary}
Let $f$, $C$, and $S(f,\delta)$ be as in Theorem \ref{newest}. Let $\delta < M^{-33/50}$ and $T = M^{3/2}$. Then,
$$|S(f,\delta)| \ll _\epsilon  M^{18/25+ \epsilon} \quad \mbox{ for every } \epsilon >0 .$$
\end{corollary}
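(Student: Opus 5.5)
The corollary should follow by substituting $T=M^{3/2}$ and $\delta<M^{-33/50}$ into Theorem~\ref{newest} and checking that each of the eight terms is $\ll M^{18/25+\epsilon}$. First I verify the hypotheses. Since $M\ge 1$ we have $M\le M^{3/2}\le M^2$. The condition $\delta\le (CM)^{-1/2}$ holds for $M$ large, because $C$ is treated as a constant and $M^{-33/50}<C^{-1/2}M^{-1/2}$ once $M^{-8/50}<C^{-1/2}$. Small $M$ is trivial, since $|S(f,\delta)|\ll M+1$. Also $\delta<1/2$.

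Next I compare each term with $M^{18/25}=M^{0.72}$.

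For the terms inside the bracket:
\begin{itemize}
\item $M^{12/25}T^{4/25}=M^{12/25+6/25}=M^{18/25}$. This is the term that forces the exponent.
\item $M^{1/2}T^{1/7}=M^{1/2+3/14}=M^{5/7}$, and $5/7<18/25$.
\item $M^{7/8}\delta^{1/4}\le M^{7/8-33/200}=M^{142/200}=M^{0.71}$.
\item $M^{12/13}\delta^{4/13}\le M^{12/13-132/650}=M^{(600-132)/650}=M^{468/650}=M^{18/25}$ exactly. This term explains the choice $33/50$.
\item $M^{2/7}T^{2/7}=M^{2/7+3/7}=M^{5/7}$.
\end{itemize}

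For the terms outside the bracket:
\begin{itemize}
\item $M^{1/2}T\delta^{5/2}\le M^{2-165/100}=M^{0.35}$.
\item $M^{1/2}T^{1/4}\delta^{1/4}\le M^{1/2+3/8-33/200}=M^{0.71}$.
\item $TM^{-1}=M^{1/2}$.
\end{itemize}

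Every term is therefore $\ll M^{18/25}$, and the factor $c(\epsilon)M^\epsilon$ gives the stated bound $|S(f,\delta)|\ll_\epsilon M^{18/25+\epsilon}$.

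There is no real obstacle beyond the arithmetic. The only care needed is the exact bookkeeping in the $\delta^{4/13}$ term, and noting that $\delta\le (CM)^{-1/2}$ is implied by the corollary's hypothesis for large $M$.
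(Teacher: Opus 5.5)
Your proposal is correct and is exactly the argument the paper intends: the corollary is read off directly from Theorem~\ref{newest} by substituting $T=M^{3/2}$ and $\delta<M^{-33/50}$, and your arithmetic checks out. The terms $M^{12/25}T^{4/25}$ and $M^{12/13}\delta^{4/13}$ each give exactly $M^{18/25}$, and all the other terms are smaller. Your derivation of $\delta\le (CM)^{-1/2}$ for large $M$ relies on $C$ being bounded, which matches how the paper treats $C$; alternatively, that hypothesis can simply be regarded as inherited from ``as in Theorem~\ref{newest}''.
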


The best previous result  was $|S| \ll M^{13/18}$ and $13/18 = 0.722\ldots > 18/25 = 0.72$. 

The proofs of the main results are similar to those in Trifonov \cite{OTrifonov2002} with two important differences. First, a new estimate of Heath-Brown 
for the number of zeros of a ternary quadratic form in a box plays a prominent role. Second, we obtain smaller secondary terms in Theorem \ref{newest} by using an estimate of Huxley and Sargos for the number of lattice points on quadratic major arcs. 

The paper is organized as follows. In Section 2 we present some properties of divided differences and their use to obtain bounds for the number of lattice points close to a smooth curve. We also explore connections between   Swinnerton-Dyer's paper  \cite{SwD1974} and divided differences.
In Section 3 we deal with convexity and 3-convexity of sets of lattice points close to a curve. In Section 4 we estimate the number of nondiagonal integer solutions of a certain system of two equations. Here the new result of Heath-Brown mentioned above plays a crucial role.
Section 5 is dedicated to the proof of Theorem \ref{newest} and Section 6 is devoted to to the proof of Theorem \ref{newtheta}.

\subsection{Notation}

$C$, $T$, and $M$ are real numbers such that $1 <  C  \leq M \leq T$ and $\delta$ is a real number in $(0, 1/8)$. 

Let $r$ be a positive integer. Define ${\mathcal F}_r$ to be the class of all real-valued functions which are defined on the interval $I = [M,2M]$, have a continuous $r$th derivative on $I$, and $T/(CM^r) \leq |f^{(r)} (x)| \leq CT/M^r$ for all $x \in I$. 

Define $S(f,\delta)$ to be the set of all integer points $(x,y)$ such that $x \in [M,2M]$ and ${\displaystyle |f(x)-y| < \delta}$. 

For any finite set of points in the plane $S$, we denote by ${\mathcal P}(S)$ its projection on the $x$-axis, and by $|S|$ its cardinality. 

We call the graph of any quadratic  function $f(x)=ax^2 + bx + c$ with $a$, $b$, and $c$ real numbers a parabola.

$f(u) \ll g(u)$ will mean that there exists an absolute constant $c$ such that $|f(u)| \leq cg(u)$ for all $u \geq 1$. $f(u) \ll_\epsilon  g(u)$ will mean that for every $\epsilon > 0$  there exists a constant $c(\epsilon)$ such that 
$|f(u)| \leq c(\epsilon )g(u)$ for all $u \geq 1$. Similarly, $f(u) \ll_C g(u)$ will mean that the implied constant in $\ll $ depends on $C$ only.

For $r$ and $n$ positive integers, we denote by $d_r(n)$ the number of distinct solutions in positive integers of the equation $x_1 x_2 \ldots x_r = n$.

For $x$ a real number we denote by $||x||$ the distance from $x$ to the closest integer.

To simplify the notation we adopt the convention $\epsilon \pm \epsilon = \epsilon$ and $c\epsilon = \epsilon$ whenever $c$ is an absolute constant. There will be no division by $\epsilon$ nor comparison of $\epsilon$'s in this paper.

\section{Divided Differences}

We use properties of divided differences in a number of places in this paper. Here we state some of the properties of divided differences we will need. The results in this section are well-known and can be found in most numerical analysis 
textbooks (for example, in \cite{IsKe}). 

Let $f: {\mathbb R} \to {\mathbb R}$ and let $x_0, x_1, \ldots, x_n$ be distinct real numbers. Then, there is a unique interpolating polynomial $P(x)$ of degree at most $n$ such that $P(x_k) = f(x_k)$ for $k=0,1,\ldots,n$. 
The coefficient of $x^n$ of $P(x)$ is the $n$th divided difference of $f$ at $x_0, x_1, \ldots, x_n$ and we denote it by $f[x_0, x_1, \ldots, x_n]$. Denote ${\displaystyle V = V(x_0, \ldots, x_n) = \prod_{0 \leq k < j \leq n} (x_j - x_k)}$ (that is, $V$ is the product of all distances between distinct points in the set $\{x_0, \ldots, x_n\}$). Also,  let 
$V_i = V(x_0, \ldots, x_{i-1},x_{i+1}, \ldots, x_n)$ for $i = 0, \ldots , n$ ($V_i$  is the product of all distances between distinct points in the set $\{x_0, \ldots, x_n\}/\{x_i\}$). 

\begin{lemma} \label{divdif1}
We have ${\displaystyle f[x_0, \ldots, x_n] = \frac{\sum_{i=0}^n (-1)^{n-i} V_i f(x_i)}{V}}$. 
\end{lemma}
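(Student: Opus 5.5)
We will prove the formula by writing the interpolating polynomial in Lagrange form and reading off its leading coefficient. The Lagrange form is
$$P(x)=\sum_{i=0}^n f(x_i)\prod_{j\neq i}\frac{x-x_j}{x_i-x_j}.$$
By uniqueness of the interpolating polynomial of degree at most $n$, this $P$ is the polynomial in the definition. Each summand has degree exactly $n$ in $x$, with leading coefficient $f(x_i)\big/\prod_{j\ne i}(x_i-x_j)$. Therefore
$$f[x_0,\ldots,x_n]=\sum_{i=0}^n\frac{f(x_i)}{\prod_{j\neq i}(x_i-x_j)}.$$

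It then remains to check the identity
$$\prod_{j\ne i}(x_i-x_j)=(-1)^{n-i}\,\frac{V}{V_i}.$$
We will split the factors of $V=\prod_{0\le k<j\le n}(x_j-x_k)$ into those that do not involve the index $i$ and those that do. The factors not involving $i$ multiply to exactly $V_i$. The remaining factors are $(x_i-x_k)$ for $k<i$, which are $i$ factors, and $(x_j-x_i)$ for $j>i$, which are $n-i$ factors. Rewriting each factor of the second kind as $-(x_i-x_j)$ gives
$$\frac{V}{V_i}=(-1)^{n-i}\prod_{j\ne i}(x_i-x_j).$$

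Substituting this into the sum gives
$$f[x_0,\ldots,x_n]=\sum_i \frac{(-1)^{n-i}V_i f(x_i)}{V},$$
using $(-1)^{-(n-i)}=(-1)^{n-i}$. This is the stated formula.

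The only delicate point is the sign bookkeeping in the second step: one has to count correctly that exactly $n-i$ factors of $V$ have the form $(x_j-x_i)$ with $j>i$. Everything else is immediate from uniqueness of interpolation.
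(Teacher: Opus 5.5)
Your proof is correct. The Lagrange form gives the leading coefficient $\sum_i f(x_i)/\prod_{j\ne i}(x_i-x_j)$. Your count of the $n-i$ factors $(x_j-x_i)$ with $j>i$ in $V/V_i$ gives exactly the sign $(-1)^{n-i}$.

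The paper itself gives no proof of this lemma. It cites it as a standard numerical-analysis fact (e.g.\ \cite{IsKe}), so there is no argument in the paper to compare yours against.

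The other standard route is Cramer's rule applied to the Vandermonde system $\sum_{k=0}^n c_k x_i^k=f(x_i)$, $0\le i\le n$. The coefficient matrix has determinant $V$. Replacing the column of $x_i^n$ by the values $f(x_i)$ and expanding along that column gives $\sum_i(-1)^{n+i}V_i f(x_i)$ at once. There the sign comes from the cofactor position rather than from counting reversed factors.

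That version shows the numerator $\sum_i(-1)^{n-i}V_if(x_i)$ directly as a determinant. This is the form the paper actually uses in the proof of Theorem~\ref{newest}, where the minors $D_{s-1}=\det(\mathcal R_{4,s})$ are identified as numerators of third divided differences. Your version is more elementary and avoids determinants; to link it to those $D_{s-1}$ you would only need one cofactor expansion along the $y$-row.

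One small imprecision: a summand of the Lagrange form need not have degree exactly $n$, since it vanishes when $f(x_i)=0$. You only use its coefficient of $x^n$, which is correct in every case.
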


\begin{lemma} \label{divdif2}
Let $m = \min (x_0, x_1, \ldots , x_n )$ and let $M = \max  (x_0, x_1, \ldots , x_n )$. Assume that $f$ has $n$th derivative on $[m,M]$. Then, there exists $\xi \in (m,M)$ such that 
$$f[x_0,x_1, \ldots , x_n] = f^{(n)} (\xi ) / n! .$$
\end{lemma}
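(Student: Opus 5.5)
The plan is to prove Lemma~\ref{divdif2} by the standard Rolle-type argument applied to the interpolation error. Let $P$ be the unique interpolating polynomial of degree at most $n$ with $P(x_k)=f(x_k)$ for $k=0,\ldots,n$. By definition, its leading coefficient (the coefficient of $x^n$) is $f[x_0,\ldots,x_n]$. Hence $P^{(n)}(x) \equiv n!\, f[x_0,\ldots,x_n]$ is constant, and it suffices to find $\xi\in(m,M)$ with $f^{(n)}(\xi)=P^{(n)}(\xi)$.

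Consider the error function $g(x)=f(x)-P(x)$ on $[m,M]$.
\begin{enumerate}
\item By construction, $g$ vanishes at the $n+1$ distinct points $x_0,\ldots,x_n$, all of which lie in $[m,M]$, with $m$ and $M$ among them.
\item Order these zeros increasingly. Between consecutive zeros, Rolle's theorem gives a zero of $g'$ strictly inside. This yields $n$ distinct zeros of $g'$ in $(m,M)$.
\item Iterate: $g^{(j)}$ has at least $n+1-j$ distinct zeros in $(m,M)$ for $j=1,\ldots,n$.
\item At $j=n$ we obtain $\xi\in(m,M)$ with $g^{(n)}(\xi)=0$, that is, $f^{(n)}(\xi)=n!\,f[x_0,\ldots,x_n]$.
\end{enumerate}

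The only point needing care is the regularity hypothesis. Applying Rolle to $g^{(j-1)}$ requires that $g^{(j-1)}$ be continuous on the closed interval between two of its zeros and differentiable on the open interval.
\begin{itemize}
\item Since $f$ has an $n$th derivative on $[m,M]$, every $f^{(j)}$ with $j\le n-1$ is differentiable there, hence continuous. The polynomial $P$ is smooth.
\item So for each step $j\le n$, the function $g^{(j-1)}$ is differentiable on $[m,M]$. This is exactly what Rolle needs.
\end{itemize}
No continuity of $f^{(n)}$ itself is required, so the hypothesis as stated suffices.

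I also record the degenerate case $n=0$: there the claim reads $f[x_0]=f(x_0)$, which is immediate with the open-interval condition read trivially. The case $n=1$ is the mean value theorem, and it is consistent with the argument above.

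I expect no real obstacle. The one thing to check is that the zeros produced at each stage are distinct and lie strictly inside $(m,M)$. This holds because Rolle's theorem places each new zero strictly between two distinct consecutive zeros of the previous derivative, and all of those already lie in $[m,M]$.
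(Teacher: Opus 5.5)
Your proof is correct. The paper gives no proof of this lemma and only defers to standard numerical analysis texts such as Isaacson--Keller. Your argument, Rolle's theorem applied $n$ times to $g=f-P$, is the standard one, and your regularity check is right: only the existence of $f^{(n)}$ on $[m,M]$ is needed. The one small point is $n=0$: there $(m,M)=\emptyset$, so the statement as written fails rather than holding trivially, and the lemma should be read as intended for $n\ge 1$.
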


\subsection{Swinnerton-Dyer's approach via divided differences}
In 1974, Swinnerton-Dyer wrote a paper \cite{SwD1974} on the number of lattice points on a convex curve.  
Although Swinnerton-Dyer did not use the term divided difference in \cite{SwD1974}, the main idea of the paper is related to divided differences. 

Consider a twice differentiable,  convex function
 $f :[M,2M] \to {\mathbb R}$ such that $\lambda_2 \geq f''(x) > 0$ for all $x$. 
Consider three lattice points on $y = f(x)$, 
 $A_0(x_0,f(x_0))$,  $A_1(x_1,f(x_1))$, and $A_2(x_2, f(x_2))$. 

Then,  $f[x_0, x_1, x_2] = \frac{2D}{(x_2-x_1)(x_2-x_0)(x_1-x_0)}$
 where $D$ is the area of  $\triangle A_0A_1A_2$. 

Since $D \geq 1/2$ and
$f[x_0,x_1,x_2]  = f''(\xi)/2 \leq \lambda_2 / 2$. 
we get $x_2 - x_0 \geq (2/\lambda_2)^\frac{1}{3}$, so each subinterval of $[M,2M]$ of  length $|I|$ contains $\ll |I|\lambda_2^{1/3} + 1$ lattice points. 

Thus,  if $\lambda_2 $ has size $1/M$ and $|I| = M$, we have  $\ll M^{2/3}$ lattice points on the curve $y=f(x)$. 

Note that if we have a lower bound better than $1/2$ for $D$, we will get a better estimate for $x_2-x_0$.

Swinnerton-Dyer's idea is  to consider four lattice points on $y=f(x)$. 
Let $A_0,A_1,A_2,A_3$ be lattice points with coordinates $(x_i,f(x_i))$ with $x_0 < x_1 < x_2 < x_3$
and let  $D_0, D_1, D_2, D_3$ be the areas of the four triangles determined by the points $A_0,A_1,A_2,A_3$. 

Let $A > 1/(\lambda_2^{1/3})$ and $B > 1/2$ be two parameters whose values will be determined later. 

There are $\ll M/A + 1$ quadruples of lattice points with $x_3 - x_0 > A$. 

Next, assume $x_3 -x_0 \leq A$. 
If  $D_i > B$ for $i=0,1,2$, or $3$, we consider the second divided difference with numerator $D_i$ and obtain as above, 
$x_3 - x_0 \gg (B/\lambda_2)^\frac{1}{3}$. Thus, there are $\ll M(\lambda_2/B)^\frac{1}{3}$ quadruples,  where for at least one $i$, $D_i > B$. 

Finally, consider the case when $x_3 - x_0 \leq A$ and $D_i \leq B$ for $i=0,1,2,3$. 
Swinnerton-Dyer showed that the above does not happen often when $f$ is in $C^3[M,2M]$,  $1/M \ll f''(x)\ll 1/M$ and 
$|f'''(x)| \ll 1/M^2$ for all $x \in [M,2M]$. 
He observed that
$D_1 - D_2 + D_3 - D_4 = 0$,
$x_1D_1 - x_2D_2 + x_3D_3 - x_4D_4 = 0$, and 
 $|x_1^2D_1 - x_2^2D_2 + x_3^2D_3 - x_4^2D_4| \ll  A^6/M^2$. 

When $A=M^{2/5}$ and $B =M^{1/5}$ he was able to show that there are $\ll_\epsilon M^{3/5+\epsilon}$
quadruples of lattice points satisfying the above system. With the above constraints on $f$, Swinnerton-Dyer showed that the curve $y=f(x)$, $x \in [M,2M]$ contains $\ll_\epsilon M^{3/5+\epsilon}$ lattice points.

The exponent $3/5$ has not been improved for the class of functions above. If one adds stronger smoothness conditions on $f$, much sharper bounds were obtained in the breakthrough work of Bombieri and  Pila \cite{BombPila} and Pila \cite{Pila}.

When $|f''|$ is not very small, for example, when $|f''|$ has size $M^{-1/2}$ one often gets better bounds by using third divided differences. It is natural to try to extend Swinnerton-Dyer's approach to third differences. 
There are two main difficulties in achieving this. First, we need to ensure that the numerators of the third divided differences are not zero. To achieve this we use work of Huxley and Sargos \cite{HuxSar} and a local 3-convexity lemma due to Konyagin. Second, we need to estimate the number of solutions of a certain system of equations in a box. To do this, we use the new estimate of Heath-Brown \cite{HeathBrown2024}.

\section{Three-convexity of a set of lattice points close to a curve}

\subsection{Preliminary lemmas}
Here we state two lemmas. The idea of the first lemma is that if some mild conditions on $\delta$ hold then one can find a subset of $S(f,\delta)$ which has the following properties:
\begin{enumerate}
\item it essentially has the same size as $S(f,\delta)$;
\item it is either strictly convex or strictly concave depending on the sign of $f''$;
\item no two elements of the subset are too close.
\end{enumerate}
The lemma below is contained in Huxley \cite[pp.~204--206]{Huxley1989}.

\begin{lemma} \label{conv}
Let $f\in\mathcal F_2$, $T>M$, and
${\displaystyle 
0<\delta^2<\frac14\min_{x\in I}|f''(x)|
}$. 

Then there exist non-intersecting sets $S_1$ and $S_2$ such that
\begin{enumerate}
\item $S(f,\delta)=S_1\cup S_2$;
\item $S_1$ is either strictly convex or strictly concave, depending on the sign of $f''$;
\item $|S_2|\ll C\delta M+1$.
\end{enumerate}
\end{lemma}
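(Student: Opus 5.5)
We will assume $f''>0$ on $I$; the concave case follows by replacing $f$ with $-f$ and $y$ with $-y$. The plan is to run through the points of $S(f,\delta)$ column by column and then extract a strictly convex chain by discarding only a few points.

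\emph{Step 1: each column contains at most one point.} Since $\delta<1/2$, the interval $(f(x)-\delta,f(x)+\delta)$ contains at most one integer. So $S(f,\delta)$ is the graph of a function on a set of integers $x_1<x_2<\dots<x_N$ in $I$, with values $y_i$ satisfying $y_i=f(x_i)+\eta_i$ and $|\eta_i|<\delta$.

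\emph{Step 2: the key inequality, via divided differences.} Take three consecutive points $x_{i-1}<x_i<x_{i+1}$ and write $h_1=x_i-x_{i-1}$, $h_2=x_{i+1}-x_i$. Let $g$ be the function with $g(x_j)=y_j$. By Lemma~\ref{divdif1},
\[
g[x_{i-1},x_i,x_{i+1}] = f[x_{i-1},x_i,x_{i+1}]+\eta[x_{i-1},x_i,x_{i+1}].
\]
By Lemma~\ref{divdif2}, the first term equals $f''(\xi)/2\ge \frac12\min|f''|>2\delta^2$. The error term is bounded in absolute value by $\frac{2\delta}{h_1h_2}$. The left side is $\frac{2D}{h_1h_2(h_1+h_2)}$, where $D$ is the signed area of the triangle formed by the three lattice points, so $2D$ is an integer. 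Comparing the two sides:
\begin{itemize}
\item If $h_1h_2\ge 1/\delta$, the main term dominates and the triple turns strictly convex, provided $h_1+h_2$ is not too large.
\item Huxley's actual argument is cleaner: it works with the lines through consecutive lattice points. A point $P_i$ where convexity fails, i.e.\ $D\le 0$, forces the chord slopes $s_i=(y_{i+1}-y_i)/h_2$ to be non-increasing at $i$. Meanwhile $f'$ strictly increases by at least $h\min f''$ across the gap, and the slope errors are at most $2\delta/h$.
\end{itemize}
The hypothesis $\delta^2<\frac14\min f''$ is used exactly to show the following: a non-convex triple requires both gaps to be short. Specifically, $\min(h_1,h_2)\ll \delta/\min f''\cdot(\cdots)$, which gives $h\le$ roughly $1/\delta$.

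\emph{Step 3: the greedy deletion.} Here $S_1$ is the upper/lower convex hull vertices of $S(f,\delta)$; more precisely, $S_1$ is the set of vertices of the lower convex hull of the points, and $S_2$ is the rest. It then suffices to bound the number of non-vertices. A non-vertex $P$ lies on or above a chord $P_aP_b$ of hull vertices. Since all points lie within $\delta$ of the convex curve, $P_b-P_a$ is a lattice vector, and the segment stays within $2\delta$ vertically of the curve. From this, the horizontal length $L=x_b-x_a$ satisfies $L^2\min f''\ll\delta$, so $L\ll(\delta/\min f'')^{1/2}$. Moreover, all the points between $P_a$ and $P_b$ lie in a strip of width $\ll\delta$ around a rational line of small denominator.

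\emph{Step 4: counting.} Counting the lattice points in such strips edge by edge gives a total of $\ll \delta\cdot(\text{total length})+(\text{number of edges with interior points})$. The first part contributes $\ll\delta M$, and the factor $C$ enters from $\min f''\ge T/(CM^2)$ when bounding slopes. To bound the number of edges carrying interior points, I would use that the denominators of the slopes $q_j$ are distinct along consecutive edges and that the slopes increase. Summing the condition $q_j\le$ (edge length) shows that such edges are few. Making this last count come out as $\ll C\delta M+1$, rather than something larger, is the main obstacle. I would follow Huxley's pp.~204--206 bookkeeping: each edge containing a deleted point has length $\ge q_j$ and lies within distance $\delta$ of the curve, which forces $q_j^{-1}\ll\delta$, i.e.\ a large denominator. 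Then, since total length is $M$, the number of such edges is $\ll\delta M\cdot C$.
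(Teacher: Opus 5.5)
The paper gives no proof here; it cites Huxley. So your sketch has to stand on its own, and it does not. The actual content of the lemma, the bound $|S_2|\ll C\delta M+1$, is never established, and the mechanism you propose for it in Step~4 runs the wrong way.

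Write $\lambda=\min_I|f''|$. If three points of $S$ lie on a line with primitive direction $(q,r)$ and have $x$-gaps $u,v$ (both multiples of $q$), your own Step~2 identity with $D=0$ gives $0\ge \lambda/2-2\delta/(uv)$, so $q^2\le uv\le 4\delta/\lambda$. Thus edges carrying deleted points have \emph{small} denominators, $q\le 2(\delta/\lambda)^{1/2}$; the typical case is $q=1$, where $f'$ passes an integer. They do not have $q\gg 1/\delta$. A bound on the number of such edges would not suffice anyway, since one edge can carry about $q^{-1}(\delta/\lambda)^{1/2}$ deleted points.

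Step~3 is also incorrect as stated. One interior point only yields $(x_P-x_a)(x_b-x_P)<4\delta/\lambda$, not $L\ll(\delta/\lambda)^{1/2}$, and the chord need not stay within $2\delta$ of the curve. For a parabola with $f''=\lambda$ you can have points of $S$ at $x_a,x_a+1,x_b-1,x_b$ on one line of integer slope with $x_b-x_a\asymp\delta/\lambda$. There the curve lies about $2\delta^2/\lambda$ below the chord in the middle, which is $\gg\delta$ when $\delta/\lambda$ is large.

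The idea you are missing is what the hypothesis $\delta^2<\lambda/4$ is for: it rules out strictly concave triples entirely. If $2D\le -1$ in Step~2, then $\tfrac{\lambda}{2}h_1h_2+\tfrac{1}{h_1+h_2}\le 2\delta$. But $h_1h_2\ge (h_1+h_2)/2$ and AM--GM make the left side at least $\sqrt{\lambda}>2\delta$, a contradiction. Hence:
\begin{itemize}
\item any three points of $S$ are strictly convex or collinear, so $S$ is a weakly convex chain;
\item $S_1$ can be taken to be its vertices, which agrees with your lower-hull choice;
\item $S_2$ is exactly the set of interior points of maximal collinear runs, and no point lies strictly above an edge, so your strip count is unnecessary.
\end{itemize}

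Now count. Take a run on a line $\ell$ with primitive direction $(q,r)$. Its points lie in $\{x:|f(x)-\ell(x)|<\delta\}$, which by convexity is at most two intervals of total length $\le 4(\delta/\lambda)^{1/2}$. The points are spaced at least $q$ apart, so the run has at most $4q^{-1}(\delta/\lambda)^{1/2}$ interior points.

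Distinct runs have distinct slopes $r/q$. Each slope is within $\delta$ of a value of $f'$, so all of them lie in an interval of length $\Lambda\le CT/M+2\delta$. Count the rationals with $q\in(Q_1,2Q_1]$ in that interval by Farey spacing; there are at most $4\Lambda Q_1^2+1$ of them. Summing dyadically over $Q_1\le 2(\delta/\lambda)^{1/2}$ gives
\[
|S_2|\ll \delta\Lambda/\lambda+(\delta/\lambda)^{1/2}\ll C^2\delta M+1,
\]
which is a bound of the required shape up to the power of $C$. This counting is the missing core of the proof.
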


The next lemma is Lemma 5 in the paper \cite{OTrifonov2002}. 

\begin{lemma} \label{mindist}
Let $f\in\mathcal F_2\cap\mathcal F_3$ and $S_1\subset S(f,\delta)$, where $S_1$ is either strictly convex or strictly concave. Let $0<E<1/(16\delta)$. Define $S_3$ to be the set of integer points $(x_j,y_j)$ in $S_1$ such that there exist elements of $S_1$, say $(x_i,y_i)$ and $(x_k,y_k)$, with $x_i<x_j<x_k$, $x_j-x_i<E$, and $x_k-x_j<E$. Then
\[
|S_3|\ll \frac{C^3T\delta E^3}{M}+\frac{CTE^5}{M^2}.
\]
\end{lemma}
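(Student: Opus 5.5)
We prove Lemma~\ref{mindist} by covering $[M,2M]$ with short intervals and counting, in each, the points of $S_3$. The only ingredient is a nonvanishing third divided difference: any four close points of $S_1$ force a large gap among their abscissae unless the curve is almost a parabola there.

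\textbf{Step 1: partition and index.} Partition $[M,2M]$ into intervals $J$ of length $E$. Each $(x_j,y_j)\in S_3$ has neighbours $(x_i,y_i)$, $(x_k,y_k)$ in $S_1$ with $x_k-x_i<2E$. Write the points of $S_1$ in increasing order of abscissa. Since $S_1$ is strictly convex (say), consecutive triples of $S_1$ span a triangle of positive area. So it suffices to count indices $j$ for which the consecutive triple $P_{j-1},P_j,P_{j+1}$ has span $<2E$. Such a triple can be taken to be the immediate neighbours: if some points with $x_i<x_j<x_k$ lie within $E$ of $x_j$, then so do the adjacent ones.

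\textbf{Step 2: quadruples and the third divided difference.} Take two consecutive such triples, i.e.\ four consecutive points $P_0,\dots,P_3$ of $S_1$ with $x_3-x_0<4E$. Let $g$ be the function with $g(x_i)=y_i$ at these four abscissae. Then $y_i=f(x_i)+\eta_i$ with $|\eta_i|<\delta$. By Lemma~\ref{divdif1},
\[
y[x_0,\dots,x_3]=\frac{\sum_i(-1)^{3-i}V_iy_i}{V}.
\]
The numerator $N=\sum(-1)^{3-i}V_iy_i$ is an integer. By Lemma~\ref{divdif1} applied to $f$ and Lemma~\ref{divdif2},
\[
\frac{N}{V}=\frac{f'''(\xi)}{6}+O\!\left(\frac{\delta\sum V_i}{V}\right).
\]
Here $V\le (4E)^6$ and $V_i/V\ll 1/\min(\text{gaps})^3$.

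There are two cases. If $N\neq0$, then $|N|\ge1$. Either $V\gg M^3/(CT)$, so that $E^6\gg M^3/(CT)$, or the $\delta$-error dominates. The first alternative should yield the term $CTE^5/M^2$ after summing: each such quadruple costs length $\gg (M^3/CT)^{1/6}$, and the number of points is at most $M$ divided by that length. Alternatively, it can be obtained more directly by counting $\ll E\cdot E^{-1}$ points per interval, with $\ll M/E$ intervals, multiplied by the probability-type factor $CTE^6/M^3$. If $N=0$, the four points lie on a parabola with rational coefficients and small height. The second-difference areas $D_i$ are then tied by the linear relations of Swinnerton-Dyer type. Combined with convexity and $|f''|\asymp T/M^2$, this forces the $\delta$-error to be comparable to $f'''$-size quantities, giving the term $C^3T\delta E^3/M$. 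In that term the factor $C^3$ comes from the ratio bounds of $f''$ and $f'''$, and the factor $\delta E^3$ from $|\eta_i|\,V_i$.

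\textbf{Step 3: summation and the main obstacle.} Summing over the $\ll M/E$ intervals, the number of indices in each interval is bounded by a density factor times $E$. This gives
\[
|S_3|\ll \frac{C^3T\delta E^3}{M}+\frac{CTE^5}{M^2}.
\]
The condition $E<1/(16\delta)$ ensures the $\eta$-perturbation cannot flip convexity over a window of length $E$. The main obstacle is the degenerate case $N=0$, where the quadruple lies on an integer-compatible parabola. I would handle it by taking a fifth point and using a second divided difference bound ($D\ge1/2$) along that parabola. The points must then lie on a fixed parabola whose second derivative matches $f''$ up to $O(\delta/E^2)$, and counting lattice points within $\delta$ of it across the window gives the $\delta E^3$ factor.
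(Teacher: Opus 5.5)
Your proposal is not yet a proof: each step that should produce a count is replaced by a heuristic. The paper itself gives no proof of this lemma (it is quoted as Lemma 5 of \cite{OTrifonov2002}), so the comparison below is against the mechanism that the hypotheses and the shape of the bound point to.

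Concretely, there are four failures. First, the reduction in Step 2 to four consecutive points with $x_3-x_0<4E$ is unjustified. Membership in $S_3$ only gives a close \emph{triple} $P_{j-1},P_j,P_{j+1}$. The next or previous point of $S_1$ can be arbitrarily far away, so close triples isolated in this way are never reached by a quadruple argument. Second, even for genuine close quadruples the third difference carries no usable information. The perturbation of the integer numerator $N$ is $\sum_i(-1)^{3-i}V_i\eta_i$, of size up to $\delta\sum_iV_i\asymp\delta E^3$, which under $E<1/(16\delta)$ can still be of order $E^2$. So $|N|\ge1$ constrains nothing. Your case $N\neq0$ is only a threshold condition, not a count. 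The ``length cost'' version gives $M^{1/2}(CT)^{1/6}$, not $CTE^5/M^2$, and the ``probability-type factor $CTE^6/M^3$'' has no justification. Third, the case $N=0$ is not argued at all. Fourth, the summation in Step 3 (``a density factor times $E$'') is not argued either, and the explanation of $C^3$ and $\delta E^3$ is only dimensional bookkeeping.

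The missing idea is to work with \emph{second} differences of the triple itself. Write $P_j-P_{j-1}=(a_1,b_1)$ and $P_{j+1}-P_j=(a_2,b_2)$ with $1\le a_1,a_2<E$. Set $m=a_1b_2-a_2b_1$; this is twice the area of the triangle, a nonzero integer by strict convexity. By Lemmas~\ref{divdif1} and~\ref{divdif2},
$m=\tfrac12a_1a_2(a_1+a_2)f''(\xi)+\theta\cdot2\delta(a_1+a_2)$ with $|\theta|\le1$ and $\xi\in(x_{j-1},x_{j+1})$. The hypothesis $E<1/(16\delta)$ is exactly what makes this error less than $1/4$; it has nothing to do with ``flipping convexity''. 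Hence $1\le|m|\ll CTE^3/M^2$, and the number of parameter triples $(a_1,a_2,m)$ is $\ll E^2\cdot CTE^3/M^2=CTE^5/M^2$, which is the second term. The hypothesis $f\in\mathcal F_3$ enters not through third differences but by making $f''$ strictly monotone with $|f'''|\ge T/(CM^3)$. For a fixed triple $(a_1,a_2,m)$, the value $f''(\xi)$ lies in an interval of length $8\delta/(a_1a_2)$, so $\xi$ lies in an interval of length $\le 8C\delta M^3/(Ta_1a_2)$. Bounding the number of points in such a class by the number of distinct, strictly increasing chord slopes $b_1/a_1$ available in that window, using $|f''|\le CT/M^2$ (with some care, e.g.\ $\gcd(a_1,a_2)\mid m$ and slopes in a class differ by multiples of $1/\gcd(a_1,a_2)$), is the source of the $C^3T\delta E^3/M$ term. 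Its three factors of $C$ come from $\max|f''|$ in the range of $m$, from $\min|f'''|$, and from $\max|f''|$ in the slope range.
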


\subsection{The work of Huxley and Sargos on long major arcs}

Let $M > 1$, $0 < \delta \le 1/4$, and let $f: [M, 2M] \to {\mathbb R}$. 

Let $r \geq 2$ be an integer and assume $f$ is in $C^r$  and $c_r \lambda_r \leq |f^(r) (x)| < C_r \lambda_r$ for some $0 < c_r < C_r$, $0 < \lambda_r < 1$, and for all $x \in [M,2M]$. 

Let $S = \{ m \in [M,2M] \cap {\mathbb Z} | ||f(m)|| < \delta \}$. 

We say that ${\mathcal A} = \{ m_1, \ldots, m_N \}$ is a major arc if ${\mathcal A} \subseteq S$, $N \geq r^2+1$, and there exists a polynomial $P$ of degree less than $r$, such that $P(m_i) = ||f(m_i)||$ for $i=1,\ldots, N$. 

Lagrange's interpolation formula implies that $P(x)$ can be written as ${\displaystyle P(x) = \frac{1}{q} \sum_{j=0}^{r-1} b_j x^j}$ with $q \in {\mathbb N}$, as small as possible,  and $b_j \in  {'\mathbb Z}$ for $j=0, \ldots, r-1$. 
According to Huxley and Sargos, the integer $q$ above is the {\it denominator of the major arc}  ${\mathcal A}$.

Moreover, we have that
$q$ divides ${\displaystyle \prod_{1 \leq i < j \leq r} (m_j - m_i) }$. In particular,  $q < M^{\frac{r(r-1)}{2}}$. 

Huxley and Sargos \cite{HuxSar} define {\it long major arcs} as follows. 

Let $$H = r^2 + 2\max{1 \leq q \leq Q_0}(r-1)^{ \omega (q)},$$

where $\omega(q)$ is the number of distinct prime divisors of $q$ and $Q_0$ is the largest possible denominator of a major arc. We have $Q_0 <  M^{\frac{r(r-1)}{2}}$.

Huxley and Sargos \cite{HuxSar} show in eqn. (2.12) that

\begin{equation} \label{Hsize}
H \ll_\epsilon M^\epsilon \quad \mbox{ for every } \epsilon > 0.
\end{equation}

We say that ${\mathcal A}$ is a {\it long major arc } if the major arc contains at least $H$ elements of $S$, that is, $N  \geq H$. 

Let $R_H$ be the total number of elements of $S$ which are on one or more long major arcs. Combining \cite{HuxSar} Lemma 2 and equations (2.4) and (2.5), one gets the following lemma.

\begin{lemma}  \label{Rhest} \text{(Huxley and Sargos)}
Let $r \geq 2$. Then,
$$R_H \ll M \delta^{1/(r-1)} + \left ( \frac{\delta}{\lambda _r} \right ) ^{1/r} .$$
\end{lemma}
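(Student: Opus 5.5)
This lemma is quoted from Huxley and Sargos \cite{HuxSar}, so my plan is to reconstruct it by combining their Lemma~2 with equations (2.4) and (2.5).

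\emph{Step 1: the rational polynomial on a long major arc is a close approximant of $f$.} Fix a long major arc $\mathcal A=\{m_1<\dots<m_N\}$ with $N\ge H$, interpolating polynomial $P$ and denominator $q$. Take any $r+1$ of its points, say $x_0<\dots<x_r$. By Lemma~\ref{divdif1} the $r$th divided difference of the nearest-integer values is a linear combination of integers. Since $\deg P<r$, the $r$th divided difference of $P$ vanishes. Hence $f[x_0,\dots,x_r]$ is at most a sum of the errors $\delta$ weighted by $V_i/V$, that is $\ll \delta/\min_i\prod_{j\ne i}|x_i-x_j|$. On the other hand, Lemma~\ref{divdif2} gives $|f[x_0,\dots,x_r]|=|f^{(r)}(\xi)|/r!\gg\lambda_r$. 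Choosing the $r+1$ points well spread along $\mathcal A$ therefore bounds the span of the arc: $(m_N-m_1)^r\ll\delta/\lambda_r$ times a combinatorial factor depending only on $r$.

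\emph{Step 2: lengths of long arcs and their multiplicity.} This is the content of (2.4)--(2.5) and the counting lemma. The points of $\mathcal A$ are integers $m$ with $qP(m)\in\mathbb Z$ lying near $q f(m)$. Step~1 bounds each arc's span by $L\ll(\delta/\lambda_r)^{1/r}$. Two distinct long major arcs can share only boundedly many points, and the threshold $H=r^2+2\max_q(r-1)^{\omega(q)}$ is chosen exactly so that this holds. The reason is that the solutions of a polynomial congruence modulo $q$ of degree $<r$ number at most $(r-1)^{\omega(q)}$ per residue block. Consequently the long arcs can be treated as essentially disjoint.

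\emph{Step 3: summing.} I partition $[M,2M]$ into blocks of length comparable to $L$, and distinguish two kinds of arc.
\begin{itemize}
\item If a single long arc covers the whole range, $M\le L$, it contributes at most its span, $\ll(\delta/\lambda_r)^{1/r}$.
\item Otherwise I count the number of arcs. The interpolation argument, applied with $r$ points (degree $r-1$), shows that consecutive arcs force $\|\cdot\|$-closeness of an $(r-1)$th difference. This gives a density of points on arcs of at most $\delta^{1/(r-1)}$ per unit length, i.e.\ $\ll M\delta^{1/(r-1)}$ points in total.
\end{itemize}
Adding the two contributions yields $R_H\ll M\delta^{1/(r-1)}+(\delta/\lambda_r)^{1/r}$.

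The main obstacle is Step~2. Controlling overlaps between different long major arcs and justifying the density bound $\delta^{1/(r-1)}$ both require the arithmetic of the denominator $q$ and the choice of $H$. Here I would rely directly on \cite{HuxSar}, Lemma~2, rather than reprove it.
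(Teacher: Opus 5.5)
The paper gives no proof of this lemma: it simply quotes \cite{HuxSar} (their Lemma~2 together with (2.4) and (2.5)). Your closing decision to rely on that lemma is therefore exactly the paper's route, and as a citation your proposal is fine. Steps 1--3, however, should not be offered as a reconstruction, because they contain incorrect claims.

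In Step 1 you cannot in general choose $r+1$ ``well spread'' points, and the span of a major arc is \emph{not} $\ll(\delta/\lambda_r)^{1/r}$, because the points of an arc can form clusters far apart. For example, take $r=2$, $P=0$ and $f(x)=\frac{\lambda_2}{2}\bigl((x-x_0)^2-D^2/4\bigr)$ with $D>(\delta/\lambda_2)^{1/2}$. Every integer within $\delta/(\lambda_2D)$ of $x_0\pm D/2$ satisfies $|f(m)|<\delta$ and lies on the arc $y=0$. This is a long arc whenever $\delta/(\lambda_2D)\gg H$, and its span is $D$. What the divided-difference argument does give is a bound on the number of points: take every $k$th point with $k=\lfloor (N-1)/r\rfloor$ and use $|x_i-x_j|\ge k|i-j|$ for distinct integers. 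This yields $\lambda_r\ll_r\delta k^{-r}$, i.e.\ each arc has $\ll(\delta/\lambda_r)^{1/r}+1$ points, which is all you need.

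The genuine gap is the passage from one arc to all of them, i.e.\ the term $M\delta^{1/(r-1)}$, which is the real content of the lemma. Your Step~2 misidentifies the role of $H$. That two distinct arcs share few points is trivial, since two distinct polynomials of degree $<r$ agree in at most $r-1$ points, and it needs no arithmetic. The factor $(r-1)^{\omega(q)}$ in $H$ instead reflects the number of roots, modulo the primes dividing $q$, of the congruence $\sum_j b_jm^j\equiv 0 \pmod q$ satisfied by the points of an arc with denominator $q$. This arithmetic information about how an arc's points are distributed, combined with counting the possible denominators and coefficients, is what controls the total number of points on long arcs. Step~3 replaces all of this with an asserted ``density $\delta^{1/(r-1)}$ per unit length'' and gives no mechanism for it, so nothing in your sketch actually establishes the first term. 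Either present the lemma purely as a citation of \cite{HuxSar}, as the paper does, or carry out that counting of long arcs.
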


\subsection{Local 3-convexity}

We will need the following lemma which is Lemma 6 in \cite{OTrifonov2002}. The lemma is based on an idea of S.~Konyagin.

\begin{lemma} \label{local}
Let $k$ be a positive integer and let $T = \{(x_1,y_1), \ldots, (x_{4k}, y_{4k}) \}$ be a set of points in the plane with $x_1 < x_2 < \ldots  < x_{4k}$. Then, either there exists a parabola which passes through at least $k$ elements of $T$, or there exists a subset $U$ of five distinct points of $T$ such 
that no parabola passes through any four points in $U$. 
\end{lemma}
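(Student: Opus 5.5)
The plan is a direct greedy construction. It rests on one elementary fact: through any three points of $T$ passes exactly one parabola (possibly degenerate, i.e.\ with leading coefficient $0$), because the abscissae are distinct. This is Lagrange interpolation by a polynomial of degree at most $2$, as in the definition of divided differences in Section~2. Consequently, four points of $T$ lie on a common parabola if and only if the fourth lies on the parabola through the other three, that is, if and only if their third divided difference vanishes.

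Assume that no parabola passes through $k$ or more elements of $T$, so every parabola contains at most $k-1$ points of $T$. We build the five points one at a time.

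\begin{enumerate}
\item Choose any three points $A_1,A_2,A_3\in T$ and let $p_{123}$ be the parabola through them.
\item Since $p_{123}$ contains at most $k-1<4k$ points of $T$, we can pick $A_4\in T$ not on $p_{123}$. Then $A_1,\dots,A_4$ do not lie on a common parabola.
\item For each of the four triples $\{i,j,l\}\subset\{1,2,3,4\}$, let $p_{ijl}$ be the unique parabola through $A_i,A_j,A_l$. Together these four parabolas contain at most $4(k-1)<4k=|T|$ points of $T$, and this count already includes $A_1,\dots,A_4$ themselves.
\item Hence some $A_5\in T$ lies on none of the $p_{ijl}$, and in particular $A_5$ is distinct from $A_1,\dots,A_4$.
\end{enumerate}

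Put $U=\{A_1,\dots,A_5\}$ and check the five $4$-subsets of $U$.

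\begin{itemize}
\item The subset $\{A_1,A_2,A_3,A_4\}$ is not on a parabola, because $A_4\notin p_{123}$ and $p_{123}$ is the only parabola through $A_1,A_2,A_3$.
\item Every other $4$-subset has the form $\{A_i,A_j,A_l,A_5\}$. Any parabola through it would have to be $p_{ijl}$ by uniqueness, but $A_5\notin p_{ijl}$.
\end{itemize}

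So no parabola passes through four points of $U$, which is the second alternative of the lemma.

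There is no substantial obstacle. The only points needing care are the following.

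\begin{itemize}
\item "Parabola" must include degenerate quadratics (lines), so that uniqueness through three points holds without exception. This matches the paper's definition, which allows any real $a$, $b$, $c$.
\item The counting inequality $4(k-1)<4k$ must hold. It does for every $k\ge1$, and this is exactly why $|T|=4k$ suffices.
\item The case $k=1$ is trivial, since any single point lies on a parabola.
\end{itemize}
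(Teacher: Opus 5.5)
Your argument is correct and complete. This paper does not prove the lemma itself; it quotes it as Lemma~6 of \cite{OTrifonov2002}, based on an idea of Konyagin. Your greedy construction is exactly the counting that the hypothesis $|T|=4k$ is built for: $A_4$ is chosen off $p_{123}$, and $A_5$ off the four parabolas $p_{ijl}$, which together cover at most $4(k-1)<4k$ points. You are also right to include degenerate parabolas, since the later use of the lemma (each $D_s\neq 0$) needs precisely that no four points of $U$ lie on the graph of a polynomial of degree at most $2$.
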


\section{The number of non-diagonal integer solutions of a certain system of two equations}

\subsection{The Heath-Brown estimate on the number of zeros of ternary quadratic forms in boxes}

The following result of Heath-Brown \cite{HeathBrown2024} plays a crucial role in this paper. The result and a sketch of the proof were communicated to the author by D.~R.~Heath-Brown in 2024.

For any non-zero integer $M$, let $M^\#  = \prod_{p^e || M, e \geq 2} p^e$ denote the (positive) squarefull part of $M$.

\begin{lemma} \label{tern} (Heath-Brown) 
Let $q$ be a non-singular integral ternary quadratic form
with matrix ${\mathcal M}$. Let $\Delta = |\det ({\mathcal M})|$ and assume $\Delta \neq 0$. Write $\Delta_0$ for the highest common factor of the $2 \times 2$ minors of ${\mathcal M}$.  
Then the number of primitive integer solutions of $q({\bf x}) = 0$ in the box $|x_i| \leq R_i$ is 
\begin{equation} \label{ubtqf}
\ll _\epsilon \left \{ 1 + \left ( \frac{R_1 R_2 R_3 \left ( \Delta_0^\# \right ) ^{3/2}}{\Delta} \right )^{1/3}  \right \} d_3(\Delta ),
\end{equation}
for any $\epsilon > 0$.
\end{lemma}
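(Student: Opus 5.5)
Lemma~\ref{tern} is Heath-Brown's result, and I would prove it by reduction to a diagonal form followed by a lattice-point count.

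\emph{Step 1: diagonalize.} Over $\mathbb{Q}$, write $q$ as a form $q'$ in new variables ${\bf y}=A{\bf x}$, where $A$ is an integral matrix. The standard route is to complete the square twice. This makes $q'$ diagonal, or at least upper triangular with controlled coefficients. The coefficients of $q'$ are governed by the minors of ${\mathcal M}$: after completing squares the three leading coefficients are essentially $a_{11}$, $\det$ of the top-left $2\times 2$ block divided by $a_{11}$, and $\Delta$ divided by that $2\times 2$ minor. This is where $\Delta_0$ enters. Every $2\times2$ minor is divisible by $\Delta_0$, and $\Delta_0^2\mid\Delta$ by the adjugate identity $\det(\mathrm{adj}\,{\mathcal M})=\Delta^2$. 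A prime dividing $\Delta_0$ to the first power only contributes nothing special, which is why only the squarefull part $\Delta_0^\#$ appears.

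\emph{Step 2: lattice reformulation.} A primitive zero ${\bf x}$ of $q$ modulo $\Delta$ lies in one of $\ll_\epsilon \Delta^\epsilon$ lattices $\Lambda$ of determinant of order $\Delta/(\text{local correction})$. I would obtain this by lifting solutions of $q\equiv 0 \pmod{p^e}$ prime by prime. For $p\nmid\Delta$ nothing is needed. For $p^e\|\Delta$ with $p\nmid\Delta_0$, the form modulo $p$ has rank $2$, and the primitive solutions lie in $\ll 1$ lattices of index $p^{e}$. When $p\mid\Delta_0$ the rank drops, and the achievable index is only $p^{e}/p^{3f/2}$, roughly, where $p^f\|\Delta_0$. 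Keeping track of these indices produces the factor $(\Delta_0^\#)^{3/2}/\Delta$. The number of lattices is at most the number of ways to split the local factors, which gives the $d_3(\Delta)$ factor.

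\emph{Step 3: count in each lattice.} For each lattice $\Lambda$, count the primitive points of $\Lambda$ in the box $|x_i|\le R_i$ that also satisfy $q=0$. Choose a reduced basis of $\Lambda$ and rescale by the $R_i$. If the box contains three independent points of $\Lambda$, the count is $\ll 1+R_1R_2R_3/\det\Lambda$, and the cube root in the lemma then comes from Step 4. Otherwise all points lie on a plane or a line, and the conic restricted to that plane, a binary form, has $\ll 1$ primitive zeros up to $\Delta^\epsilon$ factors.

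\emph{Step 4: the main obstacle.} The difficulty is getting the exponent $1/3$ rather than $1$. Following Heath-Brown's ``conic in a box'' method, I would not count all lattice points. Instead I would parametrize the conic using one known rational point and count the primitive solutions as values of binary quadratic forms in a box. An equivalent route is an iterated lattice argument: choose a sublattice of index about $(R_1R_2R_3/\Delta)^{1/3}$ modulo a suitable auxiliary prime, so that all solutions lie on $O(1)$ planes, and apply Step 3 on each plane. Either way this gives $\ll 1+(R_1R_2R_3\det(\Lambda)^{-1})^{1/3}$ per lattice. Summing over the $d_3(\Delta)\Delta^\epsilon$ lattices then yields \eqref{ubtqf}. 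The most delicate part is the local analysis at primes dividing $\Delta_0$, which is what produces exactly the $3/2$ power.
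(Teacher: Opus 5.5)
Your architecture (local conditions confine the primitive zeros to a few lattices, then a cube-root count inside each lattice) matches the paper's. However, both inputs that actually carry the lemma are missing or misdescribed.

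The first gap is Step 2. What distinguishes this lemma from Heath-Brown's earlier bound with $\Delta_0^{3/2}$ is precisely that primes with $p\,\|\,\Delta_0$ cost nothing. Your own bookkeeping contradicts this: a loss of $p^{3f/2}$ at every $p^f\,\|\,\Delta_0$ multiplies out to $\det\Lambda\gg\Delta/\Delta_0^{3/2}$, not $\Delta/(\Delta_0^\#)^{3/2}$, and you give no argument that $f=1$ is free. For example, for $x_1^2+px_2^2-px_3^2$ one has $\Delta=p^2$ and $\Delta_0=p$. The primitive zeros lie in the two lattices $p\mid x_1$, $x_3\equiv\pm x_2\pmod p$, of index $p^2=\Delta$, whereas your formula only guarantees index $p^{1/2}$. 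The paper does not do this local analysis itself; it imports it as Lemma~\ref{basis} (Browning--Heath-Brown). That lemma gives at most $C(q)\le d_3(\Delta)$ lattices, each of determinant $\gg\Delta/(\Delta_0^\#)^{3/2}$, and so yields the factor $d_3(\Delta)$ exactly rather than your $d_3(\Delta)\Delta^\epsilon$. Without this input, your Step 2 only reproduces the older bound, which is exactly what the lemma improves. Step 1 plays no role. Note also that $\Delta_0^2\mid\Delta$ is false in general ($\mathcal M=pI$ gives $\Delta=p^3$, $\Delta_0=p^2$); the adjugate identity only gives $\Delta_0^3\mid\Delta^2$.

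The second gap is Step 4, which does not produce the exponent $1/3$. A single sublattice of index about $(R_1R_2R_3/\Delta)^{1/3}$ does not force the points in the box onto a plane. Moreover, ``all solutions lie on $O(1)$ planes'' cannot hold: a plane through the origin contains at most four primitive zeros, while primitive Pythagorean triples give $\asymp V^{1/3}$ zeros in a cube of volume $V$. A correct version of that idea needs about $p\asymp(V/\det\Lambda)^{1/3}$ sublattices of relative index $p^3$, each meeting the box in a plane. There is one per point of the conic modulo an auxiliary prime $p$, imposing $q\equiv0\pmod{p^2}$, and you would also need a uniform choice of $p$. The parametrization route similarly needs uniformity in coefficients you do not control.

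The paper avoids all of this. It rescales $\Lambda$ to $\Lambda^*$, takes the basis ${\bf g}^{(j)}$ of Lemma~\ref{basis1}, and expresses lattice points in the corresponding basis of $\Lambda$. The primitive zeros of $q$ in $\Lambda$ lying in the box then become primitive zeros ${\bf c}$ of an integral non-singular ternary form $q_1$ with $|c_j|\le L_j:=729\sqrt3/|{\bf g}^{(j)}|$. Lemma~\ref{solbox} bounds their number by $O(1+(L_1L_2L_3)^{1/3})$ \emph{uniformly in the form}. Combined with $\prod_j|{\bf g}^{(j)}|\ge\det\Lambda^*=\det\Lambda/(R_1R_2R_3)$, this gives $1+(R_1R_2R_3(\Delta_0^\#)^{3/2}/\Delta)^{1/3}$ per lattice. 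Your Step 3 starts down this road (reduced basis, rescaling), but then counts lattice points instead of applying a coefficient-uniform conic-in-a-box bound to the transformed form; that is the missing idea.
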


\begin{proof}
Let $q$ be a nonsingular integral ternary quadratic form with discriminant ${\bf \Delta}_q$. We use the following lemma which is Lemma 2.6 in the paper of Browning and Heath-Brown  \cite{BH2018}.

First, we need to define a notation used by Browning and Heath-Brown. 

For any prime $p$ let $\overline{q}$ denote the reduction of $q$ modulo $p$. Define a completely multiplicative function $\chi _q : {\mathbb N} \to \{ 0, \pm 1 \}$, via
$$\chi_q (p) = \left \{  \begin{array}{ll}
+1,& \quad \text{ if } \rank \; \overline{q} =2 \text{ and } \overline{q} \text{ is reducible over } {\mathbb F}_p, \\
-1,&\quad  \text{ if } \rank \;  \overline{q} =2 \text{ and } \overline{q} \text{ is irreducible over } {\mathbb F}_p,  \\
0,& \quad \text{ if } \rank \;  \overline{q}  \neq 2.
\end{array} \right .$$

\begin{lemma} (Browning and Heath-Brown) \label{basis}
Let $q$ be a non-singular  ternary quadratic form over ${\mathbb Z}$ with matrix ${\bf A}$.  Let $ {\bf \Delta}_q = \det {\bf A}$ and let ${\bf \Delta}_0$ be the highest common factor of the $2 \times 2$ minors of ${\bf A}$. Then there are
lattices $\Lambda _i$, $1 \leq i \leq I$ such that
$$\{{\bf  y} \in {\mathbb Z}_{prim}^3 \ : \ q({\bf y}) = 0 \} \subseteq \bigcup\limits_{i=1}^I \Lambda _i.$$
Moreover we have
$$\det \Lambda_i \gg \frac{|{\bf \Delta}_q|}{\left ( {\bf \Delta}_0^\# \right )^{3/2}} $$
for all $i$, and $I \ll C(q)$, where
$$C_q = \prod_{p^\xi || {\bf \Delta}_q, p | 2D(q)} d(p^\xi) \prod_{p^\xi || {\bf \Delta}_q, p \nmid 2D(q)} \left \{ \sum_{k=0}^\xi \chi (p^k)   \right \}.$$
\end{lemma}

Let $\Lambda$  be one of the lattices $\Lambda _i$ whose union contains the primitive solutions of ${\bf q}(x)=0$. Define a new lattice, $\Lambda ^*$ by 
$$\Lambda^* = \{ (x_1/R_1, x_2/R_2, x_3/R_3) : (x_1,x_2,x_3) \in \Lambda \}.$$

Then, 
\begin{equation} \label{dlt*}
\det (\Lambda ^*) = \det (\Lambda)/(R_1 R_2 R_3) \gg \frac{|{\bf \Delta}_q|}{\left ( {\bf \Delta}_0^\# \right )^{3/2}R_1R_2R_3}.
\end{equation}
Note that if $(x_1, x_2, x_3) \in \Lambda$ is such that $|x_i| \leq R_i$, $i=1,2,3$, then $(x_1/R_1, x_2/R_2, x_3/R_3) \in \Lambda ^*$ satisfies $|x_i/R_i| \leq 1$ for $i=1,2,3$. 

Next, we need a lemma which is Lemma 2.3 in \cite{BH2018}.

\begin{lemma} (Browning and Heath-Brown)  \label{basis1}
Let $\Lambda \subseteq {\mathbb R}^n$ be a lattice of dimension $k \leq n$. Then there exists a basis ${\bf g}^{(1)}, \dots , {\bf g}^{(k)}$ of $\Lambda $ for which 
$$\prod_{j=1}^k  |{\bf g}^{(j)}| \geq \det \Lambda , $$
and such that if ${\bf x} \in {\mathbb R}^n$ can be written as 
$${\bf x} = \sum_{j=1}^k c_j {\bf g}^{(j)}, $$
then $$|c_j| \leq n^{2n} |{\bf x}|/ | {\bf g}^{(j)}|.$$ 
\end{lemma}

Let $g^{(1)},g^{(2)},g^{(3)}$ (with $g^{(i)}\in {\mathbb R}^3)$ be one of the bases for $\Lambda ^*$ with the properties described in Lemma \ref{basis1}. So, we are trying to get a bound for the number of solutions of 
$q({\bf x} ) =0$ of the form ${\bf x}=c_1 g^{(1)} + c_2 g^{(2)} + c_3 g^{(3)}$ which are in the box  $|c_j| \leq 3^6 |{\bf x}|/ | {\bf g}^{(j)}|$, $j=1,2,3$ (with $|{\bf x}| \leq \sqrt{3}$ since the components of ${\bf x}$ have absolute values $\leq 1$) . Note that $q(c_1 g^{(1)} + c_2 g^{(2)} + c_3 g^{(3)}) $ is 
an integral non-singular quadratic form in $c_1,c_2, c_3$. Denote it by $q_1(c_1,c_2,c_3)$. Moreover, if $(c_1,c_2,c_3)$ is non-primitive triple, so is $(x_1,x_2,x_3)$ where $(x_1,x_2,x_3)={\bf x}$. 
Next, we will use Lemma 2.5 of the paper of Browning and Heath-Brown \cite{BH2018}.

\begin{lemma} \label{solbox}
 Let $q(x_1,x_2,x_3)$ be a non-singular integral quadratic form. Let $L_1, L_2, L_3 > 0$. Then there are 
$O\left ( 1  + (L_1L_2L_3)^{1/3} \right )$ primitive integer solutions to $q(x_1,x_2,x_3)=0$.
\end{lemma}

We apply Lemma \ref{solbox} to $q_1$ with $L_j = 729 \sqrt{3} /|g^{(j)}|$, and obtain the bound $$O\left ( 1 + 1/(\prod_{j=1}^3  |{\bf g}^{(j)}|)^{1/3} \right ).$$ 
By Lemma \ref{basis} $\prod_{j=1}^3  |{\bf g}^{(j)}| \geq \det(\Lambda ^*)$. Using equation \eqref{dlt*} and the fact that $C(q) \leq d_3({\bf \Delta })$, we complete the proof of the lemma.

\end{proof}

\subsection{The system of two equations} \label{twoeq}
Let $A > 1$ and $B > 1$ be real numbers. We get an upper bound for the number of integer solutions of the system
\begin{equation} \label{sys}
\begin{array}{rcl}
D_1 + D_3& \neq &D_2 + D_4\\
z_1D_1 + z_3D_3& = &z_2D_2 + z_4D_4\\
z_1^2D_1 + z_3^2D_3& = &z_2^2D_2 + z_4^2D_4,
\end{array}
\end{equation}
which satisfy the conditions
\begin{equation} \label{cond1}
0 < z_1 < z_2 < z_3 < z_4 \leq A, \quad 0 < |D_j| \leq B \ \mbox{ for } j=1,2,3,4,
\end{equation}
and 
\begin{equation} \label{cond2}
\gcd(z_1,z_2,z_3) | \gcd(D_1,D_2,D_3,D_4).
\end{equation}

Note that without the conditions  $D_1 +D_3 \neq D_2 + D_4$ and \eqref{cond1}  the system \eqref{sys} 
has $\gg A^2 B^2$ solutions, for example $D_1=D_2$, $z_1=z_2$, $D_3=D_4$,  $z_3=z_4$. We are trying to estimate the number of non-diagonal solutions of \eqref{sys}. 

Based on a heuristic we conjecture that if $A \geq B$, the number of solutions of the system  \eqref{sys} is

$$ \ll _\epsilon A^{1+\epsilon}B^{2+\epsilon}.$$

\begin{lemma} \label{nond}
The number of integer solutions of the system \eqref{sys} which satisfy \eqref{cond1},  \eqref{cond2}, and $\gcd(z_1,z_2,z_3)=d$ is
$$ \ll _\epsilon \frac{A^{1+\epsilon}B^{2+\epsilon}}{d^2} + \frac{B^{4 + \epsilon}}{d^4}.$$
\end{lemma}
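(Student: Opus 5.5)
Fix the quadruple $(D_1,D_2,D_3,D_4)$ and regard \eqref{sys} as a system in $z_1,\dots,z_4$. The middle equation is linear in the $z_j$. Using it to eliminate $z_4$, via $D_4z_4=D_1z_1-D_2z_2+D_3z_3$, and multiplying the last equation by $D_4$, we get the single ternary equation
$$
q_D(z_1,z_2,z_3):=D_4\left(D_1z_1^2-D_2z_2^2+D_3z_3^2\right)-\left(D_1z_1-D_2z_2+D_3z_3\right)^2=0 .
$$
Given $(z_1,z_2,z_3)$ and the $D_j$, the value of $z_4$ is then determined. The matrix of $q_D$ is $D_4\,\mathrm{diag}(D_1,-D_2,D_3)-vv^T$ with $v=(D_1,-D_2,D_3)$. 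By the matrix determinant lemma,
$$
\det=-D_1D_2D_3D_4^2\,(D_4-D_1+D_2-D_3).
$$
The condition $D_1+D_3\neq D_2+D_4$, together with $D_j\neq 0$, is therefore exactly non-singularity of $q_D$. This is what makes Lemma~\ref{tern} applicable.

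Now use $\gcd(z_1,z_2,z_3)=d$. Write $z_j=dz_j'$, so that $(z_1',z_2',z_3')$ is a primitive zero of $q_D$ in the box $|z_j'|\le A/d$. By \eqref{cond2}, $d\mid D_j$ for every $j$; write $D_j=dD_j'$ with $0<|D_j'|\le B/d$. Since $q_D$ is homogeneous of degree $2$ in the $D$'s, I replace $q_D$ by $q_{D'}$, whose determinant is $\Delta'=|D_1'D_2'D_3'D_4'^2(D_4'-D_1'+D_2'-D_3')|\le (B/d)^6$. Lemma~\ref{tern} gives, for each admissible $D'$,
$$
\ll_\epsilon B^{\epsilon}\left\{1+\frac{A}{d}\,\frac{(\Delta_0'^{\#})^{1/2}}{\Delta'^{1/3}}\right\},
$$
using $d_3(\Delta')\ll_\epsilon B^\epsilon$. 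Summing the constant term over the $\ll (B/d)^4$ quadruples $D'$ gives the term $B^{4+\epsilon}/d^4$.

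For the second term, I would first ignore $\Delta_0'^{\#}$. Then the sum factors as
$$
\sum_{D_1',D_2',D_3'}|D_1'D_2'D_3'|^{-1/3}\sum_{D_4'}|D_4'|^{-2/3}\,|D_4'-c|^{-1/3},\qquad c=D_1'-D_2'+D_3'.
$$
The inner sum over $D_4'$ is $\ll \log B$, and the outer sum is $\ll (B/d)^2$. This gives a contribution $\ll A^{1+\epsilon}B^{2+\epsilon}/d^3$, already better than what is claimed. So the remaining task is to show that the factor $(\Delta_0'^{\#})^{1/2}$ costs on average at most a factor $d$ and $B^\epsilon$.

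The main obstacle is controlling $\Delta_0'^{\#}$, the squarefull part of the gcd of the $2\times 2$ minors, on average over $D'$. I would compute a few explicit minors of $D_4'\,\mathrm{diag}(D_1',-D_2',D_3')-vv^T$. For instance, the complementary minor of the $(3,3)$ entry is $-D_1'D_2'D_4'(D_4'-D_1'+D_2')$, and similarly for the other diagonal minors. An off-diagonal minor, such as the one for entry $(1,2)$, is essentially $\pm D_1'D_2'D_3'D_4'$. Hence $\Delta_0'$ divides the gcd of these products. I would then use $\Delta_0'^3\mid\Delta'^2$ together with a splitting according to the value $g=\Delta_0'^{\#}$. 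For a fixed squarefull $g$, the $D'$ with $g\mid\Delta_0'$ force $g$ to divide several of these products simultaneously. That restricts the $D'$-sum by a factor roughly $g^{-1/2}$ (up to divisor functions), which offsets $g^{1/2}$. Since squarefull $g\le B^{O(1)}$ number only $B^{O(1)/2}$, a dyadic summation over $g$, with the case of large $g$ handled through the lower bound $\Delta'\ge g^{3/2}$, should keep the loss within $B^\epsilon$ and a bounded power of $d$. This is where I expect the real work, and I would allow the slack between $d^{-3}$ and the claimed $d^{-2}$ to absorb any crude estimates there. Finally, summing the two contributions yields
$$
\ll_\epsilon \frac{A^{1+\epsilon}B^{2+\epsilon}}{d^2}+\frac{B^{4+\epsilon}}{d^4}.
$$
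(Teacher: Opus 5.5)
Your reduction is correct and is the same as the paper's: eliminating $z_4$ gives $q_D$, which is $-1$ times the form \eqref{tf}; its determinant is $|D_0D_1D_2D_3|D_4^2$; you then apply Lemma~\ref{tern} and get the $B^{4+\epsilon}/d^4$ term from the constant $1$. Rescaling both $z$ and $D$ by $d$ is also legitimate; the paper instead sums over $a=\gcd(D_1,\dots,D_4)$ with $d\mid a$. There is, however, a genuine gap. The whole content of the lemma is the bound
\[
\sum_{D}\frac{(\Delta_0^{\#})^{1/2}}{\Delta^{1/3}}\ll_\epsilon B^{2+\epsilon},
\]
and you have not proved it.

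Your sketch for it fails quantitatively. Suppose a squarefull value $g$ of $\Delta_0^{\#}$ only cuts the weighted $D$-sum by $g^{-1/2}$. Then the $D$ with $\Delta_0^{\#}=g$ contribute about $g^{1/2}\cdot g^{-1/2}$ times your unweighted bound, i.e. $O(1)$ times it, for each $g$. There are $\asymp K^{1/2}$ squarefull $g\sim K$, so summing over $g\le B^{O(1)}$, dyadically or not, loses a power of $B$, not $B^{\epsilon}$. You need a saving of about $g^{-1}$, so that the $g$-sum becomes $\sum_{g\ \mathrm{squarefull}}g^{-1/2}\ll\log B$. The relation $\Delta_0^3\mid\Delta^2$ does not rescue large $g$: it only gives ratio $\le 1$, which is the trivial total $AB^4$. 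Nor does the slack between $d^{-3}$ and $d^{-2}$ help, because the whole difficulty is already present at $d=1$, where there is no slack.

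The missing idea is the paper's way of collapsing the multivariable divisibility conditions into a one-variable sum.
\begin{itemize}
\item Pick $i\in\{0,1,2,3\}$ with $D_i^4\ge|D_0D_1D_2D_3|$ (such $i$ exists); say $i=3$.
\item From $\Delta_0=|D_4|\gcd(D_0D_1D_2,D_0D_1D_3,D_0D_2D_3,D_1D_2D_3)$ you get $\Delta_0\mid t:=|D_0D_1D_2D_4|$, hence $\Delta_0^{\#}\mid t^{\#}$.
\item Since $|D_3|^3\ge|D_0D_1D_2|$, you have $\Delta\ge|D_0D_1D_2|^{4/3}D_4^2$. So for $|D_4|\sim G$ the ratio is at most $G^{-2/9}(t^{\#})^{1/2}t^{-4/9}$.
\item The tuple $(D_0,D_1,D_2,D_4)$ determines $D_3$ through $D_0=D_1-D_2+D_3-D_4$. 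Hence the sum over quadruples is $\ll B^{\epsilon}\sum_{t\le 4B^3G}(t^{\#})^{1/2}t^{-4/9}$.
\item Writing $t=sr$ with $r$ squarefull and $s$ squarefree, and using that there are $\ll x^{1/2}$ squarefull numbers up to $x$, this is $\ll x^{5/9}\log x$ with $x=4B^3G$ (Lemma~\ref{ser}).
\item Summing $G^{-2/9}(B^3G)^{5/9}$ over dyadic $G\le B$ gives $B^{2+\epsilon}$.
\end{itemize}
This is the rigorous form of your ``density versus gain'' heuristic, with density $1/g$ rather than $g^{-1/2}$.

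The bound $\Delta_0\mid t$ holds for every $D$, not only primitive ones. So inserting this argument into your rescaled setting would finish your proof, and would in fact give the slightly stronger $A^{1+\epsilon}B^{2+\epsilon}/d^3$ in the first term.
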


To prove Lemma \ref{nond} we need the following auxiliary lemma.
For a positive integer $n$, denote by $s(n)$ the squarefree part of $n$, that is $s(n) = n/ n^{\#}$.

\begin{lemma} \label{ser}
Let $x \geq 2$. Then, 
$$\sum_{n \leq x} \frac{(n^{\#})^{1/2}}{n^{4/9}} \ll x^{5/9} \log x.$$
\end{lemma}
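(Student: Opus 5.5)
We will prove Lemma \ref{ser} by grouping the integers $n \leq x$ according to their squarefull part. Every positive integer factors uniquely as $n = s\,m$ with $m = n^{\#}$ squarefull, $s = s(n)$ squarefree, and $\gcd(s,m)=1$. Dropping the coprimality and squarefreeness conditions only enlarges the sum, so
$$\sum_{n \le x} \frac{(n^{\#})^{1/2}}{n^{4/9}} \le \sum_{\substack{m \le x\\ m \text{ squarefull}}} \frac{m^{1/2}}{m^{4/9}} \sum_{s \le x/m} \frac{1}{s^{4/9}}.$$

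For the inner sum we use the elementary bound $\sum_{s \le y} s^{-4/9} \ll y^{5/9}$, valid for $y \ge 1$. This gives
$$\ll x^{5/9}\sum_{\substack{m \le x\\ m \text{ squarefull}}} m^{1/2-4/9-5/9} = x^{5/9}\sum_{\substack{m\le x\\ m \text{ squarefull}}} m^{-1/2}.$$
The exponents combine exactly to $-1/2$, so the whole problem reduces to bounding $\sum_{m \le x,\ m\ \mathrm{squarefull}} m^{-1/2}$.

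This remaining sum is the only point needing care, and it is where the $\log x$ comes from. Write each squarefull $m$ as $m = a^2b^3$ with $b$ squarefree; this representation is unique, and in any case the map $(a,b)\mapsto a^2b^3$ covers all squarefull $m$. Then
$$\sum_{\substack{m\le x\\ m \text{ squarefull}}} m^{-1/2} \le \sum_{b \ge 1} b^{-3/2} \sum_{a \le x^{1/2}} a^{-1} \ll \zeta(3/2)\log x.$$
Alternatively, partial summation with $Q(t) \ll t^{1/2}$ from \eqref{asympt} gives $\int_1^x t^{-1/2}\,dQ(t) \ll \log x$ directly.

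Combining the two steps yields $\ll x^{5/9}\log x$, which is the claimed bound. No further obstacle arises; the only point to verify is that the exponent of $m$ is exactly $-1/2$, since this is precisely the borderline case that produces the logarithm rather than a constant.
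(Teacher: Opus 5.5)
Your proof is correct and follows the paper's argument. Both group $n$ by its squarefull part $m=n^{\#}$, drop the squarefree and coprimality conditions, bound $\sum_{s\le x/m}s^{-4/9}\ll (x/m)^{5/9}$, and reduce everything to $\sum_{m\le x,\ m\ \text{squarefull}} m^{-1/2}\ll\log x$. The only difference is in that last sum: the paper uses dyadic blocks and Bateman's bound of $\ll A^{1/2}$ squarefull numbers up to $2A$, while you use the parametrization $m=a^2b^3$ directly, which is essentially what underlies that count anyway.
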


\begin{proof}
Denote the set of squarefull integers by ${\mathcal R}$ and the set of squarefree integers by ${\mathcal U}$. We have
$$S(x):= \sum_{n \leq x}\frac{(n^{\#})^{1/2}}{n^{4/9}} =  \sum_{r \in {\mathcal R}, r \leq x} \sum_{n^{\#} = r, n \leq x} \frac{(n^{\#})^{1/2}}{n^{4/9}}.$$
Therefore, since $n = s(n) n^{\#}$, 
$$S(x) =  \sum_{r \in {\mathcal R}, r \leq x} r^{1/2} \sum_{s \in {\mathcal U}, \gcd(s,r)=1, s \leq x/r} \frac{1}{(sr)^{4/9}} \leq \sum_{r \in {\mathcal R}, r \leq x} r^{1/18} \sum_{s \leq x/r} \frac{1}{s^{4/9}} \ll 
\sum_{r \in {\mathcal R}, r \leq x} r^{1/18} \left ( \frac{x}{r} \right )^{5/9}. $$
Thus, 
$$S(x) \ll x^{5/9} \sum_{r \in {\mathcal R}, r \leq x} \frac{1}{r^{1/2}}.$$

Now, we claim that  ${\displaystyle \sum_{r \in {\mathcal R}, r \leq x}  \frac{1}{r^{1/2}} \ll \log x}$. Indeed, if $A \geq 1$, 
$$\sum_{r \in {\mathcal R}, A \leq r <  2A}  \frac{1}{r^{1/2}} \leq \frac{1}{A^{1/2}} \sum_{r \in {\mathcal R}, A \leq r <  2A} 1 \ll 1,$$
where we have used Bateman's result that there are $\ll A^{1/2}$ squarefull numbers which are $\leq 2A$. Letting $A$ run through powers of $2$, $2^k \leq x$, adding the resulting inequalities, we get
${\displaystyle \sum_{r \in {\mathcal R}, r \leq x}  \frac{1}{r^{1/2}} \ll \log x}$. 
\end{proof}

Next, we prove Lemma \ref{nond}.

\begin{proof}
We start as in \cite{OTrifonov2002}. From \eqref{sys} we have $$z_4 = (z_1D_1 - z_2D_2 + z_3D_3)/D_4.$$

Substituting the expression for $z_4$ in the last equation of \eqref{sys} and simplifying we get

\begin{eqnarray}
(D_1^2 - D_1D_4)z_1^2 + (D_2^2 + D_2D_4)z_2^2 + (D_3^2 - D_3D_4)z_3^2  \label{tf} \\
-2D_1D_2z_1z_2 + 2D_1D_3z_1z_3 - 2D_2D_3 z_2z_3 = 0. \nonumber
\end{eqnarray}

The left-hand-side  of the above equation is a ternary quadratic form, say $q$,  in $z_1, z_2, z_3$.  The absolute value of the determinant of $q$ is $${\bf \Delta }= |D_0D_1D_2D_3|D_4^2,$$
where we have denoted $D_0 = (D_1 + D_3) - (D_2 + D_4)$. 

As in \cite{OTrifonov2002} we have
\begin{equation} \label{dlt0}
{\bf \Delta}_0 = |D_4| \gcd( D_0D_1D_2, D_0D_1D_3, D_0D_2D_3, D_1D_2D_3).
\end{equation}

We consider $D_1,D_2,D_3,D_4$ as fixed (note that $D_1,D_2,D_3,D_4$ uniquely determine the value of $D_0$) and estimate the number of integer solutions $S(D_1,D_2,D_3,D_4)$, 
$(z_1,z_2,z_3)$ of \eqref{tf} which satisfy $0 < z_i \leq A$, for $i=1,2,3$.

For $a \in {\mathbb N}$ with $a \leq B$, let $N_a$ be the set of quadruples $(D_1,D_2,D_3,D_4)$ where $D_i$ satisfy the conditions of Lemma \ref{nond} and $\gcd(D_1,D_2, D_3,D_4)=a$,  that is
$$
N_a = \{ (D_1,D_2,D_3,D_4)\;  | \; 0< |D_i| \leq B, \text{ for } i=1,2,3,4, D_1 + D_3 \neq D_2+D_4,$$ $$ \text{ and } \gcd(D_1,D_2, D_3,D_4)=a \}.
$$

First, we will consider the case $(D_1,D_2,D_3,D_4) \in N_1$. Then $\gcd(D_0,D_1,D_2,D_3) = 1$ (since $D_0 = (D_1 + D_3) - (D_2 + D_4)$). Moreover, \eqref{cond2} implies $\gcd(z_1,z_2,z_3)=1$, so we will
be counting primitive solutions of $q({\bf z})=0$.

By Lemma \ref{tern} we have
\begin{equation} \label{ND}
S(D_1,D_2,D_3,D_4) \ll d_3(\Delta)  \left \{ 1 + A\frac{({\bf \Delta}_0^{\#})^{1/2}}{{\bf \Delta}^{1/3}}\right \}.
\end{equation}

Next we split $N_a$ into four (possibly intersecting) parts. For $i \in \{0,1,2,3 \}$, define 
$$N_{a,i} = \{ (D_1,D_2,D_3,D_4) \in N_a \ |\  D_i^4 \geq |D_0D_1D_2D_3| \}.$$

Note that we get a contradiction if $D_i^4 < |D_0D_1D_2D_3|$ for $i=0,1,2,3$. 

Thus, ${\displaystyle N_a \subseteq \bigcup_{i=0}^3 N_{a,i}}$.










For $1 \leq a \leq B$, define $$S_a := \sum_{(D_1, D_2, D_3,D_4) \in N_a} S(D_1, D_2, D_3,D_4) \text{ and } S_{a,i} := \sum_{(D_1, D_2, D_3,D_4) \in N_{a,i}} S(D_1, D_2, D_3,D_4) .$$

Clearly, $$S_a \leq \sum_{i=0}^3 S_{a,i}.$$

Next, we estimate $S_{1,3}$. The estimation of $S_{1,i}$ for $i=0,1,2$ is similar. 

Recall that ${\bf \Delta}_0 = |D_4| \gcd( D_0D_1D_2, D_0D_1D_3, D_0D_2D_3, D_1D_2D_3)$. 

Therefore, ${\bf \Delta}_0 \mid D_0D_1D_2D_4$, so $({\bf \Delta}_0)^{\#} \mid |D_0D_1D_2D_4|^{\#}$.

For $G \geq 1$ and $i \in \{0,1,2,3 \}$ define $S_{a,i,G} = \{ (D_1, D_2, D_3, D_4) \in N_{a,i}\;  | \; G \leq D_4 < 2G \}$.

Let $ (D_1, D_2, D_3, D_4) \in N_{1,3,G}$. Then $G \leq |D_4|  < 2G$ and $|D_3| \geq |D_0D_1D_2|^{1/3}$. 

Therefore, 
$$
\frac{({\bf \Delta}_0^{\#})^{1/2}}{{\bf \Delta}^{1/3}} \leq \frac{( |D_0D_1D_2D_4|^{\#})^{1/2}}{(|D_0D_1D_2D_3|D_4^2)^{1/3}} \leq \frac{( |D_0D_1D_2D_4|^{\#})^{1/2}}{(|D_0D_1D_2|^{4/3}|D_4|^{4/3})^{1/3}G^{2/9}}.
$$

Hence,
\begin{equation} \label{ratb}
\frac{({\bf \Delta}_0^{\#})^{1/2}}{{\bf \Delta}^{1/3}} \leq G^{-2/9} \frac{( |D_0D_1D_2D_4|^{\#})^{1/2}}{ |D_0D_1D_2D_4|^{4/9}}.
\end{equation}

Since ${\bf \Delta} \leq  4B^6$, $d_3({\bf \Delta}) \ll _\epsilon B^\epsilon$ and using equation \eqref{ND} we get
\begin{equation} \label{S13}
S_{1,3} \ll _\epsilon B^\epsilon \sum_{(D_1, D_2, D_3,D_4) \in N_{1,3}}  \left  \{ 1 + A\frac{({\bf \Delta}_0^{\#})^{1/2}}{{\bf \Delta}^{1/3}}\right \} = B^\epsilon |N_{1,3}| + AB^\epsilon \sum_{(D_1, D_2, D_3,D_4) \in N_{1,3}}\frac{({\bf \Delta}_0^{\#})^{1/2}}{{\bf \Delta}^{1/3}}
\end{equation}

We have
\begin{equation} \label{S131}
 \sum_{(D_1, D_2, D_3,D_4) \in N_{1,3}}\frac{({\bf \Delta}_0^{\#})^{1/2}}{{\bf \Delta}^{1/3}} \leq  \sum_{G=2^k, 2^k \leq B}  \sum_{(D_1, D_2, D_3,D_4) \in N_{1,3,G}}\frac{({\bf \Delta}_0^{\#})^{1/2}}{{\bf \Delta}^{1/3}}
\end{equation}

Now, using the equation \eqref{ratb} we get 
\begin{equation} \label{S132}
 \sum_{(D_1, D_2, D_3,D_4) \in N_{1,3}}  \frac{({\bf \Delta}_0^{\#})^{1/2}}{{\bf \Delta}^{1/3}} \leq  \sum_{G=2^k, 2^k \leq B} G^{-2/9}  \sum_{(D_1, D_2, D_3,D_4) \in N_{1,3,G}}  \frac{( |D_0D_1D_2D_4|^{\#})^{1/2}}{ |D_0D_1D_2D_4|^{4/9}}.
\end{equation}

Note that when $(D_1, D_2, D_3,D_4) \in N_{1,3,G}$ we have $ |D_0D_1D_2D_4| \leq 4B^3G$. Also for a positive integer $t \leq 4B^3G$ the equation $|D_0D_1D_2D_4|  = t$ has $\ll_\epsilon B^\epsilon$ solutions. 

Therefore,
\begin{equation} \label{S133}
 \sum_{(D_1, D_2, D_3,D_4) \in N_{1,3}}  \frac{({\bf \Delta}_0^{\#})^{1/2}}{{\bf \Delta}^{1/3}} \ll _\epsilon B^\epsilon   \sum_{G=2^k, 2^k \leq B} G^{-2/9} \sum_{t=1}^{4B^3G} \frac{(t^{\#})^{1/2}}{t^{4/9}}.
\end{equation}

Applying Lemma \ref{ser} with $x=4B^3G$, we get 
 \begin{equation} \label{S134}
 \sum_{(D_1, D_2, D_3,D_4) \in N_{1,3}}  \frac{({\bf \Delta}_0^{\#})^{1/2}}{{\bf \Delta}^{1/3}} \ll _\epsilon B^\epsilon   \sum_{G=2^k, 2^k \leq B} G^{-2/9} (B^3G)^{5/9} \log B \ll _\epsilon B^{2+2\epsilon},
\end{equation}
where we have used $\log B \ll _\epsilon B^\epsilon$.

Combining \eqref{S13} and \eqref{S134} we obtain
\begin{equation} \label{S135}
S_{1,3} \ll _\epsilon B^\epsilon |N_{1,3}|  + AB^{2 + 3\epsilon}.
\end{equation}

One obtains similarly 
$$S_{1,i} \ll _\epsilon B^\epsilon |N_{1,i}|  + AB^{2 + 3\epsilon},$$
  when $i=0,1,2$. 

Therefore, 
\begin{equation} \label{S136}
S_1 \ll _\epsilon B^\epsilon |N_1|  + AB^{2 + 3\epsilon}.
\end{equation}

Next, we need estimates for $S_a$ when $a>1$.  We proceed in the same way as above with two differences. First, since $\gcd(D_1, D_2, D_3,D_4)=a$, we can replace each $D_i$ in \eqref{tf} by $D_i' = D_i/a$ for $i=0,1,2,3,4$, and the equation still holds.
Now, we have $|D_i'| \leq B/a$ for $i=0,1,2,3,4$. 
We get that the number of {\bf primitive} solutions of $q(z_1,z_2,z_3)=0$ when $(D_1, D_2, D_3,D_4) \in N_a$ is $\ll _\epsilon B^\epsilon |N_1|  + A\left ( \frac{B}{a} \right )^{2 + 3\epsilon}$.

However, $\gcd(D_1, D_2, D_3,D_4)=a$ and condition \eqref{cond2} only gives us $\gcd(z_1,z_2,z_3) | a$. If $(z_1,z_2,z_3)$ is not a primitive solution with $\gcd(z_1,z_2,z_3) = b$, then $(z_1/b,z_2/b,z_3/b)$ is a primitive solution and 
$b | a$. So, each primitive solution is associated to at most $d(a) \ll _\epsilon B^\epsilon $ non-primitive solutions. 

Thus, 
\begin{equation} \label{S137}
S_a \ll _\epsilon B^{2\epsilon} |N_a|  + A\left ( \frac{B}{a} \right )^{2 + 3\epsilon}B^\epsilon
\end{equation}

Using that $|N_a| \leq (B/a)^4$ we complete the proof of the lemma.


\end{proof}

\section{The proof of Theorem \ref{newest}}

\begin{proof}
The proof of this theorem will follow closely the proof in \cite{OTrifonov2002}. In places which are very technical and exactly coincide with the proof in \cite{OTrifonov2002} we refer to the relevant pages of that paper. 

Our strategy will be to split $S(f,\delta)$ into a nice part and a remainder. The nice part will  locally have a property related to 3-convexity  and will be such that no two of its elements are very close. Also $|S(f, \delta)| \ll_\epsilon M^\epsilon |\text{  nice part }| + |\text{ remainder }|$, 
and the size of the remainder will be relatively small. To estimate the size of the nice part we apply the approach of Swinnerton-Dyer extended to third divided differences. 

First, by Lemma \ref{conv},
$
S(f,\delta)=S_1\cup S_2,
$
where $S_1$ is either strictly convex or strictly concave, depending on the sign of $f''$, and $|S_2|\ll \delta M+1$.
Therefore, 
\begin{equation} \label{S2}
|S(f, \delta)| \ll |S_1| + \delta M+1.
\end{equation}

Let $E = \min (1/(17\delta), C(M\delta)^{1/2})$.  Since $1 \leq C \leq M$ and $\delta < (CM)^{-1/2}$ we have 
$E \ll C(M\delta)^{1/2}$.

We apply Lemma \ref{mindist}  to $S_1$ and obtain a subset $S_3$ containing the elements of $S_1$ which are close to both of their neighbors. (The point $(x_j,y_j)$ is in $S_3$ if $|x_j - x_{j-1}| < E$ and $|x_j - x_{j+1}| < E$.)  We discard all elements of $S_3$ from $S_1$, and then discard every other element of the remaining set. We denote what is left by $S_4$. Clearly $|S_1|\le 2|S_4|+|S_3|$, and if $(x_i,y_i)$ and $(x_j,y_j)$ are distinct elements of $S_4$, then $|x_j-x_i|>E$.

By Lemma \ref{mindist} we have
\begin{equation} \label{S3est}
|S_3| \ll M^{1/2}T\delta^{5/2}.
\end{equation}
We obtain, 
\begin{equation} \label{S4}
|S(f, \delta)| \ll |S_4| + \delta M+  M^{1/2}T\delta^{5/2}+1.
\end{equation}

Let $H$ be the number defined in Section 3.2 with $r=3$.
We split the set $S_4$ into groups of $4H$ consecutive elements (where the last group may be shorter).

We say that a group is of type 1 if at least $H$ elements of the group are on the same parabola, a long major arc (with $r=3$). By the lemma of Huxley and Sargos, Lemma \ref{Rhest}, 
we have that the contribution to $S_4$ from such groups is at most $4R_H \ll M \delta^{1/2} + \left ( {\delta}/{\lambda _3} \right ) ^{1/3} $. Let $S_5$ be the union of all groups of type 1 in $S_4$. 

Since $\lambda_3 \gg T/M^3$ we have that $|S_5| \ll M \delta^{1/2} +M (\delta / T)^{1/3}$. 

If a group is not of type 1 (we will say it is of type 2), then by Lemma \ref{local} it contains a quintuple $\{ (x_j,y_j) | j=1,\ldots,5 \}$ such that no four distinct points of the quintuple are on the same parabola.
W.l.o.g. we can assume $x_1 < x_2 < x_3 < x_4 < x_5$.  Let $S_6$ be the union of all groups of type 2, and let $S_7$ be the subset of $S_6$ containing the above quintuples (one for each type 2 group). 

Thus, $|S_4| \leq |S_5| + |S_6|$ and $|S_6| \leq (4H/5)|S_7| \ll_\epsilon M^\epsilon |S_7|$. (Recall that $H \ll_\epsilon M^\epsilon$.) 
Combining the above inequalities with equation \eqref{S4} we obtain
\begin{equation} \label{S7}
|S(f, \delta)| \ll |S_6| +  M^{1/2}T\delta^{5/2}+M \delta^{1/2} +M (\delta / T)^{1/3}+1.
\end{equation}

We omitted a $\delta M$ term  since $M \delta^{1/2} > M\delta$. 

The set $S_7$ has the necessary properties we need for the extension of the Swinnerton-Dyer's approach. The set is either strictly convex or strictly concave, the $x$-coordinates of any two points in the set differ at least by $E$, and each quintuple in the set is such that no four distinct points of the quintuple are on the same parabola.

Next, we work on estimating $|S_7|$.

Consider a quintuple of consecutive integer points in $S_7$,
$
\{(x_i,y_i):1\le i\le 5\}.
$

Let $A$ be a positive parameter. There are at most $M/A+1$ quintuples with $x_5-x_1>A$.

Let $S'_7$ consist of quintuples in $S_7$ with  $x_5-x_1\le A$. Then,
\begin{equation} \label{S'7}
|S_7| \leq |S'_7| + \frac{5M}{A} + 5.
\end{equation}

Next, we  estimate $|S'_7|$. Define $a_i=x_{i+2}-x_{i+1}$ and $b_i=y_{i+2}-y_{i+1}$ for $i=0,1,2,3$. Then $a_0+a_1+a_2+a_3\le A$ and each $a_i \geq E$.

Consider the matrix
\[
{\mathcal R}=\begin{pmatrix}
1&1&1&1&1\\
x_1&x_2&x_3&x_4&x_5\\
x_1^2&x_2^2&x_3^2&x_4^2&x_5^2\\
x_1^3&x_2^3&x_3^3&x_4^3&x_5^3\\
y_1&y_2&y_3&y_4&y_5
\end{pmatrix}.
\]
Define $D_{s-1}=\det({\mathcal R}_{4,s})$ for $s=1,\ldots ,5$, where ${\mathcal R}_{4,s}$ is the $(4,s)$ minor of ${\mathcal R}$. Note that each $D_i$ is the numerator of a third divided difference. If we define $g(x_i)=y_i$ for $i=1,2,3,4,5$, then, for example $D_4$ is the numerator of the divided difference $g[x_1,x_2,x_3,x_4]$. Each $D_s$ is nonzero because no four points of the quintuple lie on a parabola.

This gives a mapping ${\mathcal H}$  from the quintuples in $S'_7$ to thirteen-tuples $$(a_0,\dots,a_3,b_0,\dots,b_3,D_0,\dots,D_4).$$ Note that the mapping ${\mathcal H}$  is one-to-one. Indeed, if the $i$th and $j$th quintuples map to the same thirteen-tuple
$(a_0,\dots,a_3,b_0,\dots,b_3,D_0,\dots,D_4)$, then 
$$\frac{y_2^{(i)} - y_1^{(i)}}{x_2^{(i)} - x_1^{(i)}}= \frac{b_0}{a_0} = \frac{y_2^{(j)} - y_1^{(j)}}{x_2^{(j)} - x_1^{(j)}},$$
which contradicts the strong convexity (concavity) of $S_7$. 

The identities
\[
\begin{array}{rcl}
D_0-D_1+D_2-D_3+D_4&\ =\ &0,\\
x_1D_0-x_2D_1+x_3D_2-x_4D_3+x_5D_4&\ =\ &0,\\
x_1^2D_0-x_2^2D_1+x_3^2D_2-x_4^2D_3+x_5^2D_4&\ =\ &0,\\
x_1^3D_0-x_2^3D_1+x_3^3D_2-x_4^3D_3+x_5^3D_4&\ =\ &-\det({\mathcal R})
\end{array}
\]
hold. After translating by $x_1$ and setting $z_j=x_{j+1}-x_1$ for $j=1,2,3,4$, we obtain a system of the form
\[
\begin{array}{rcl}
-z_1D_1+z_2D_2-z_3D_3+z_4D_4&\ =\ &0,\\
-z_1^2D_1+z_2^2D_2-z_3^2D_3+z_4^2D_4&\ =\ &0, \text{ and }\\
-z_1^3D_1+z_2^3D_2-z_3^3D_3+z_4^3D_4&\ =\ &-\det({\mathcal R}).
\end{array}
\] 
Further, one can get an upper bound for $D_j$, $j=0,\ldots,4$. For example, from 
Lemma \ref{divdif1} it follows that 
$$f[x_1,x_2,x_3,x_4]V(x_1,x_2,x_3,x_4) = D_4 + \theta \delta A^3,$$
where $|\theta| < 4$. 
Therefore, 
\[
0<|D_4|\le B:=A^6\frac{CT}{M^3}+4\delta A^3.
\]
Working similarly we obtain 
\begin{equation} \label{Dbound}
0<|D_j|\le B:=A^6\frac{T}{M^3}+4\delta A^3 \text{ for } j=0,\ldots ,4.
\end{equation}

Let $d= \gcd(z_1,z_2,z_3)$. From the definition of $z_1,z_2,z_3$ we have $d  \mid  \gcd(a_0,a_1,a_2)$.
Next, we show that $d\mid D_j$ for $j=0,\dots ,4$.

Using the definition of $D_3$ we obtain 
$$D_3=a_0(a_0+a_1)(a_1(b_2+b_3)-(a_2+a_3)b_1)-(a_2+a_3)(a_1+a_2+a_3))(a_0b_1-a_1b_0).$$
Thus, $d|D_3$. We show similarly that $d|D_i$ for $i=1,2,4$ (see \cite[p.314]{OTrifonov2002}  for details.) 

Since $0 \neq D_0 = D_1 - D_2 + D_3 - D_4$ we have $d \mid D_0$ and $D_1 + D_3 \neq D_2 + D_4$. 

Consider the map defined on ${\mathcal H}(S'_7)$ by
$$p:(a_0,\dots,a_3,b_0,\dots,b_3,D_0,\dots,D_4) \to (z_1,z_2,z_3,z_4,D_1,D_ 2,D_3,D_4).$$

Note that each octuple $(z_1,z_2,z_3,z_4,D_1,D_ 2,D_3,D_4)$ in the image of $p$ is a solution of the system of equations \eqref{sys} with conditions \eqref{cond1} and \eqref{cond2},  see Section \ref{twoeq}.

By Lemma \ref{nond} if $\gcd(z_1,z_2,z_3)=d$  the system of equations \eqref{sys} with conditions \eqref{cond1} and \eqref{cond2} has 
$$\ll _\epsilon \frac{A^{1+\epsilon}B^{2+\epsilon}}{d^2} + \frac{B^{4 + \epsilon}}{d^4}$$
solutions. This gives us an upper bound for the image of $p$, $p({\mathcal H}(S'_7))$.

The next lemma (contained in \cite{OTrifonov2002}) gives an upper bound for the size of the preimage of an octuple in 
$p({\mathcal H}(S'_7))$.

\begin{lemma} \label{preim}
Let
\begin{equation} \label{Abound}
A \leq \min \left(  \frac{1}{8\delta}, \frac{1}{2}M^{5/7}(CT)^{-2/7}, \frac{1}{3}M^{1/2}(\delta CT)^{-1/4},  M^2/(CT) \right).
\end{equation}
Furthermore, let $(z_1,z_2,z_3,z_4,D_1,D_ 2,D_3,D_4)$ be an octuple in $p({\mathcal H}(S'_7))$ with $\gcd(z_1,z_2,z_3)=d$. 
Then, there are $\ll d$ thirteen-tuples $(a_0,\dots,a_3,b_0,\dots,b_3,D_0,\dots,D_4)$ which map to 
$(z_1,z_2,z_3,z_4,D_1,D_ 2,D_3,D_4)$.
\end{lemma}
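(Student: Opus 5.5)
Given the octuple, the $x$-differences are fixed: $a_0=z_1$, $a_1=z_2-z_1$, $a_2=z_3-z_2$, $a_3=z_4-z_3$. So the thirteen-tuple is determined by $(b_0,\dots,b_3)$, since $D_0=D_1-D_2+D_3-D_4$. The plan is to show that the $D_j$ essentially determine the $b_i$ up to $\ll d$ choices.

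First, write each $D_j$ explicitly as a linear form in $(b_0,b_1,b_2,b_3)$ whose coefficients are polynomials in the fixed $a_i$. This uses the cofactor expansion along the row $(y_1,\dots,y_5)$, after translating so that $y$ enters only through differences. The $D_j$ satisfy the one linear relation $\sum(-1)^jD_j=0$, so the four independent values $D_1,\dots,D_4$ give a linear system in the four unknowns $b_i$. I expect this system to have rank $3$ rather than $4$. Adding a linear function $\lambda x+\mu$ to $y$ leaves all third divided difference numerators unchanged, because $1$ and $x$ are rows of ${\mathcal R}$. Only the shift $y\mapsto y+\lambda x$ changes the differences: it sends $b_i\mapsto b_i+\lambda a_i$. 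Hence the solution set in $b$ is a coset of $\{\lambda(a_0,\dots,a_3)\}$ intersected with ${\mathbb Z}^4$. Here $\lambda$ must be rational with $\lambda a_i\in{\mathbb Z}$ for all $i$, i.e.\ $\lambda\in \frac{1}{g}{\mathbb Z}$ with $g=\gcd(a_0,\dots,a_3)$, which divides $d$.

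Second, I bound the range of $\lambda$ using the geometry of $S_7$. The slopes $b_0/a_0$ are chord slopes of a curve within $\delta$ of $f$, so they lie within $O(\delta/E + CTA/M^2)$ of $f'(x_2)$. More decisively, the point $(x_1,y_1)$ is fixed only up to the choice of quintuple, and the real constraint is that the curve must pass near all five points. Two admissible $b$-vectors differing by $\lambda a$ give two sets of points both within $\delta$ of $f$. Subtracting their two interpolating data shows that $\lambda (x_j-x_1)$ is within $2\delta$ of the difference of values of $f$ at the same abscissae translated, which is nearly impossible unless the translations differ. Since $x_1$ is not fixed by the octuple, I would instead run the standard counting argument of \cite{OTrifonov2002}. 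The conditions \eqref{Abound} guarantee that the cubic Taylor approximation of $f$ on a window of length $A$ is accurate to $<\delta$-scale: $A\le \frac12M^{5/7}(CT)^{-2/7}$ and $A\le \frac13 M^{1/2}(\delta CT)^{-1/4}$ control the fourth-order and mixed error terms. Together with $A\le 1/(8\delta)$, this shows that the slope $b_0/a_0$, and hence $\lambda$, is confined to an interval of length $O(1)$. That yields $\ll g\le d$ admissible values of $\lambda\in\frac1g{\mathbb Z}$.

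The main obstacle is this second step: making rigorous that the integer vector $b$ is pinned down to an interval of $\lambda$ of bounded length, uniformly over all quintuples sharing the octuple. This must be deduced from $x_5-x_1\le A$, the bounds on $f''$ and $f'''$, and \eqref{Abound}. In particular, I expect the condition $A\le M^2/(CT)$ to be what makes the slope variation $f''\cdot A$ at most $O(1/A)\cdot$const, and hence forces $|\lambda|\ll 1$. I would follow the explicit estimates on p.~314 of \cite{OTrifonov2002} here.
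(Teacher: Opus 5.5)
There is a genuine gap, and it starts in your first step. The map $(b_0,\dots,b_3)\mapsto(D_1,\dots,D_4)$ has rank $2$, not $3$. Each $D_{s-1}=\det(\mathcal{R}_{4,s})$ is a $4\times4$ determinant with rows $1$, $x$, $x^2$, $y$; only the $x^3$ row is deleted. So adding any quadratic polynomial to $y$ leaves every $D_j$ unchanged. Equivalently, the three identities $\sum_j(-1)^jx_{j+1}^mD_j=0$ for $m=0,1,2$ confine $(D_0,\dots,D_4)$ to a $2$-dimensional space.

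Your sentence ``only the shift $y\mapsto y+\lambda x$ changes the differences'' is therefore false. The shift $y\mapsto y+\alpha x^2$ sends $b_k\mapsto b_k+\alpha(z_{k+1}^2-z_k^2)$, up to a multiple of $a_k$, with $z_0=0$. Hence the admissible $b$-vectors over a fixed octuple form a coset of the rank-$2$ lattice of integral vectors $\bigl(\alpha(z_{k+1}^2-z_k^2)+\beta a_k\bigr)_{k=0}^{3}$. Counting $\lambda\in\frac1g\mathbb{Z}$ ignores the curvature parameter $\alpha$ entirely. You can see $\alpha$ in the second-difference numerators $N_k=a_{k-1}b_k-a_kb_{k-1}$: $\alpha$ shifts them by $\alpha\,a_{k-1}a_k(a_{k-1}+a_k)$, while $\beta$ leaves them fixed.

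Your second step does not repair this, and it is unsupported even for $\lambda$. The octuple does not record $x_1$, so different preimages may a priori lie anywhere in $[M,2M]$. Across that interval $f'$, and hence the chord slope $b_0/a_0$, varies by at least $T/(CM)$. The condition $A\le M^2/(CT)$ only bounds the variation of $f'$ inside one window of length $A$, so it cannot give $|\lambda|\ll1$.

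The real content of the lemma is to control how many positions along $[M,2M]$ can carry a quintuple with the given octuple. That needs ingredients absent from your sketch:
\begin{itemize}
\item Since $A\le 1/(8\delta)$, each $N_k$ lies within $2\delta A\le 1/4$ of $a_{k-1}a_k(a_{k-1}+a_k)$ times a second divided difference of $f$. So $\alpha$ must approximately equal $\tfrac12\bigl(f''(\xi')-f''(\xi)\bigr)$ for points $\xi,\xi'$ in the two windows.
\item The lower bound $|f'''|\ge T/(CM^3)$, which makes $f''$ strictly monotone, is what ties $\alpha$ to the position.
\item The arithmetic of the lattice of admissible $(\alpha,\beta)$ must then be played against the size constraints \eqref{Abound} to produce the factor $d$.
\end{itemize}

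Your argument uses neither the lower bound on $|f'''|$ nor the minimal spacing $E$ between points of $S_7$. The paper notes that this spacing is used only in this lemma, which signals that the key mechanism is missing. Ending with a deferral to \cite{OTrifonov2002} for the decisive estimate, after misidentifying the structure of the fiber, leaves the statement unproved.
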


See \cite{OTrifonov2002} pp.314---316 for the proof of the lemma. The proof of Lemma \ref{preim} is the only place where the condition that 
the $x$-coordinates of any two distinct points in $S_7$ differ by at least $E$ is used in this paper. 

Summing over $d \leq A$ we obtain that $$| {\mathcal H}(S'_7)| \ll _\epsilon A^{1+\epsilon}B^{2+\epsilon}+ B^{4 + \epsilon}.$$

Since ${\mathcal H}$ is one-to-one we get the same upper bound on $|S'_7|$. 

Combining the above with equation \eqref{S'7} we obtain 
\begin{equation} \label{S7b}
|S_7| \ll _\epsilon A^{1+\epsilon}B^{2+\epsilon}+ B^{4 + \epsilon} + \frac{M}{A}.
\end{equation}

Recall that $B = A^6\frac{T}{M^3}+4\delta A^3$.

Therefore
\begin{equation} \label{S7noB}
|S_7| \ll _\epsilon A^\epsilon  \left ( A^{13}\frac{T^2}{M^6} + A^7\delta^2 + A^{24}\frac{T^4}{M^{12}} + A^{12}\delta^4  \right ) + \frac{M}{A}.
\end{equation}

Minimizing the right-hand-side of the above inequality with respect to $A$, subject to the constraint \eqref{Abound} we obtain 
\begin{equation} \label{S7bound}
\begin{array}{lcl}
|S_7|& \ll _\epsilon &M^\epsilon  \left (M^{1/2}T^{1/7} + M^{12/25}T^{4/25} + M^{7/8}\delta^{1/4} + M^{12/13}\delta^{4/13} \right ) + M\delta + M^{2/7}T^{2/7}+ \\
& &+M^{1/2}T^{1/4}\delta^{1/4}+T/M.
\end{array}
\end{equation}

Combining equations \eqref{S7}, \eqref{S7bound}, and the inequality $|S_6| \ll_\epsilon M^\epsilon |S_7|$ we obtain 
\begin{equation} \label{final}
\begin{array}{lcl}
|S(f,\delta)|& \ll & c(\epsilon)M^\epsilon  \left (M^{1/2}T^{1/7} + M^{12/25}T^{4/25} + M^{7/8}\delta^{1/4} + M^{12/13}\delta^{4/13} + M\delta +  \right .\\
& &\left . +M^{2/7}T^{2/7}+M^{1/2}T^{1/4}\delta^{1/4}+T/M\right ) + M^{1/2}T\delta^{5/2} + M \delta^{1/2} + M(\delta/T)^{1/3}.
\end{array}
\end{equation}

Now, $1 < M^{1/2}T^{1/7}$, $\delta M < \delta M^{1/2}$, $M\delta^{1/2} \ll M^{7/8}\delta^{1/4}$ since $\delta \ll M^{-1/2}$. Also,  $M(\delta/T)^{1/3} \leq M^{2/3}\delta^{1/3} < M^{12/13}\delta^{4/13}$, so four of the terms in \eqref{final} can be deleted.
\end{proof}

\section{The proof of Theorem \ref{newtheta}}

We follow closely the argument in \cite{OTrifonov2002} and add a new ingredient, Theorem \ref{newest}.
\begin{proof}
Without loss of generality we can assume that $8/65 < \theta \leq 0.129$ since the theorem has already been established for $1/8 < \theta < 1/2$ in \cite{HuxTri}. 
We also assume that $x \geq 10^7$. 

Next, we use a lemma from \cite[Lemma 5]{HuxTri}.

\begin{lemma} \label{red}
Let $\epsilon_1 > 0$, $\sqrt{x} \leq \epsilon_1^3 h \leq \epsilon_1^5 x$,  and $h = x^{1/2 + \theta}$. Then
$$Q(x+h) - Q(x) = \frac{\zeta \left (\frac{3}{2}  \right )}{2\zeta(3)} x^\theta + O \left ( \epsilon_1 x^\theta \right ) + O(R_1) + O(R_2)$$
where $R_1$ is the number of pairs of positive integers $(m,k)$ with $\epsilon_1 x^\theta < m \leq x^{1/5}$, $m^2k^3 \in (x,x+h]$, and  
$R_2$ is the number of pairs of positive integers $(m,k)$ with $\epsilon_1 x^\theta < k \leq x^{1/5}$, $m^2k^3 \in (x,x+h]$.
\end{lemma}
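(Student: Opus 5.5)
The plan is to parametrize squarefull numbers and then split the $(m,k)$-plane into a main region and two exceptional regions; the exceptional regions are exactly $R_1$ and $R_2$. Every squarefull $n$ can be written uniquely as $n=m^2k^3$ with $k$ squarefree. Hence
$$Q(x+h)-Q(x)=\#\{(m,k):\ \mu^2(k)=1,\ x<m^2k^3\le x+h\}.$$
Put $K=\epsilon_1 x^\theta$. Split the pairs into three classes:
\begin{enumerate}
\item[(i)] pairs with $k\le K$;
\item[(ii)] pairs with $K<k\le x^{1/5}$;
\item[(iii)] pairs with $k>x^{1/5}$.
\end{enumerate}
Class (ii) is contained in the set counted by $R_2$, after dropping squarefreeness, so it contributes $O(R_2)$.

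For class (i), fix a squarefree $k\le K$ and count the integers $m$ in $\bigl(\sqrt{x/k^3},\sqrt{(x+h)/k^3}\bigr]$. This number is the length of the interval plus $O(1)$. By the mean value theorem the length is
$$\frac{h}{2\sqrt{x}\,k^{3/2}}\bigl(1+O(h/x)\bigr)=\frac{x^\theta}{2k^{3/2}}\bigl(1+O(h/x)\bigr).$$
Summing over squarefree $k\le K$ gives
$$\frac{x^\theta}{2}\sum_{k\le K}\frac{\mu^2(k)}{k^{3/2}}+O(K)+O\Bigl(\frac{x^\theta h}{x}\Bigr).$$
Extending the sum to all $k$ costs $O(x^\theta K^{-1/2})$, and the full sum equals $\zeta(3/2)/\zeta(3)$. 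The remaining errors are handled by the hypotheses $\sqrt x\le \epsilon_1^3h\le\epsilon_1^5x$:
\begin{itemize}
\item $O(K)=O(\epsilon_1x^\theta)$;
\item $h/x\le \epsilon_1^2$, so the term $O(x^\theta h/x)$ is $O(\epsilon_1 x^\theta)$;
\item $x^\theta=h/\sqrt x\ge \epsilon_1^{-3}$, hence $K^{-1/2}=(\epsilon_1x^\theta)^{-1/2}\le \epsilon_1$, and the tail is $O(\epsilon_1x^\theta)$.
\end{itemize}
This yields the main term with error $O(\epsilon_1 x^\theta)$.

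For class (iii), reverse the roles of $m$ and $k$. Since $k>x^{1/5}$ and $m^2k^3\le x+h\le 2x$, we get $m^2\le 2x^{2/5}$, so $m\ll x^{1/5}$.
\begin{itemize}
\item Pairs with $\epsilon_1x^\theta<m\le x^{1/5}$ are counted by $R_1$.
\item The boundary range $x^{1/5}<m\le\sqrt2\,x^{1/5}$ needs a small adjustment: there $k^3\le (x+h)/m^2<x^{3/5}(1+h/x)$ forces $k\le x^{1/5}(1+O(\epsilon_1^2))$. This is a window of length $O(h x^{-4/5})$ in $k$. In fact, I expect the pairs here to be absorbed either into $R_1$ directly or into the same kind of estimate as the next case.
\item Pairs with $m\le K$ remain. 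For fixed $m$, the number of $k$ with $k^3\in(x/m^2,(x+h)/m^2]$ is at most
$$\Bigl(\frac{x+h}{m^2}\Bigr)^{1/3}-\Bigl(\frac{x}{m^2}\Bigr)^{1/3}+1\ll \frac{h}{x^{2/3}m^{2/3}}+1.$$
Summing over $m\le K$ gives $\ll h x^{-2/3}K^{1/3}+K$. Here $K=\epsilon_1x^\theta$. The first term equals $x^{\theta}\cdot x^{-1/6}K^{1/3}\le \epsilon_1^{1/3}x^{\theta}\cdot x^{\theta/3-1/6}$, and this is $O(\epsilon_1x^\theta)$ because $\theta<1/2$ and $x$ is large in terms of $\epsilon_1$, again via the hypotheses.
\end{itemize}

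Collecting the three classes gives the claimed formula. The only delicate point is the bookkeeping in the last class. One must check that every leftover pair lands in $R_1$ or in an $O(\epsilon_1x^\theta)$ error, and that the precise hypotheses $\sqrt x\le\epsilon_1^3h\le\epsilon_1^5x$ suffice to make all the error terms (the tail, the $h/x$ correction, and the $m\le K$ count) of size $\epsilon_1x^\theta$. The remaining steps are routine interval counting.
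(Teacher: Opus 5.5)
The paper does not prove this lemma; it quotes it from \cite[Lemma 5]{HuxTri}. Your decomposition via $n=m^2k^3$ with $k$ squarefree is the natural one, and most of it is correct. In class (i) you use the hypotheses exactly where they are needed: $\epsilon_1^3x^\theta\ge 1$ controls the tail $x^\theta K^{-1/2}$, and $h\le\epsilon_1^2x$ controls the $O(h/x)$ correction. Class (ii) lies in $R_2$, and the range $K<m\le x^{1/5}$ of class (iii) lies in $R_1$. For $m\le K$ your bound is right, since $hx^{-2/3}K^{1/3}=\epsilon_1^{1/3}x^\theta(h/x)^{1/3}\le\epsilon_1x^\theta$; that identity is the precise reason, rather than ``$x$ large''.

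The gap is the corner $m>x^{1/5}$, $k>x^{1/5}$, which you leave as an expectation. These pairs are counted by neither $R_1$ (which needs $m\le x^{1/5}$) nor $R_2$ (which needs $k\le x^{1/5}$). So they cannot be ``absorbed into $R_1$''; you must show directly that there are $O(\epsilon_1x^\theta)$ of them. The per-$m$ bound from your last case does not do this on its own. Summed over all $x^{1/5}<m\le\sqrt2\,x^{1/5}$, the ``$+1$'' terms give a contribution of size $x^{1/5}$. That exceeds $\epsilon_1x^\theta$ when $\theta<1/5$, which is the range the paper actually uses.

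What closes the gap is that both variables are trapped in short windows. You already observed that $x^{1/5}<k\le x^{1/5}(1+h/x)^{1/3}$, so there are at most $hx^{-4/5}+1$ such $k$. For each such $k$, the number of $m$ with $x<m^2k^3\le x+h$ is at most $\frac{h}{2\sqrt{x}\,k^{3/2}}+1\le hx^{-4/5}+1$. Hence the corner contributes $\ll(hx^{-4/5}+1)^2\ll x^{2\theta-3/5}+1$. Finally, $x^{2\theta-3/5}=x^\theta\,(h/x)\,x^{-1/10}\le\epsilon_1^2x^\theta$ and $1\le\epsilon_1^3x^\theta$. Both are $\le\epsilon_1x^\theta$ for $\epsilon_1\le1$, which is implicit throughout. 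With this added, your proof is complete.
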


We use Lemma \ref{red} with $\epsilon_1 = x^{-0.01}$.

In \cite[pp.316,317]{OTrifonov2002} it has been established that $R_1 \ll x^{0.12}\log^2x$. Since $\frac{8}{65}= 0.12307\ldots > 0.12$, we only need to estimate $R_2$. 

Let $S_2(A,B)$ be the number of integers $k$ in the interval $(A,B]$ for which there exists an integer $m$ with $m^2k^4 \in (x,x+h]$. Then, $R_2=S_2(x^{\theta - 0.01}, x^{1/5})$.

It is easy to check that 
$$|S_2(M,2M)| \leq |\{ k \in (M,2M] \cap {\mathbb Z}\  |\ ||f_1(k)|| < \delta_1 \}|$$
where $f_1(k) = x^{1/2}k^{-3/2}$ and $\delta_1 = x^\theta M^{-3/2}$.

In \cite[pp.317,318]{OTrifonov2002} it was proved that $S_2(x^{\theta - 0.01},x^{0.158}) \ll x^{0.123}$.

The next range is $(x^{0.158}, x^\frac{7}{39}]$. Note that $\frac{7}{39}=0.179\ldots > 0.158$. We split the interval  $(x^{0.158}, x^\frac{7}{39}]$ into dyadic intervals  of the form $(M,2M]$.  We apply Theorem \ref{newest} on the interval $(M,2M]$ with $f_1(k) = x^{1/2}k^{-3/2}$ and $\delta_1 = x^\theta M^{-3/2}$.
We have $T = x^{1/2}M^{-3/2}$ and $\delta_1 = x^{\theta}M^{-3/2}$.

All conditions of Theorem \ref{newest} hold and we obtain
\begin{equation} \label{.17}
\begin{array}{lcl}
S_2(M,2M)&\ll&c(\epsilon) M^\epsilon \left ( x^{2/25}M^{6/25} +   x^{1/14}M^{2/7} +  x^{1/7}M^{-1/7}+ x^{\theta/4}M^{-3/8}  \right . \\
 & &\left .  +x^{4\theta/13}M^{-6/13} \right) + x^{(1+5\theta)/2}M^{-19/4}+x^{(1+2\theta)/8}M^{-1/4}+x^{1/2}M^{-5/2}.
 \end{array}
\end{equation}

Next, we need a lemma which is well-known in  mathematical  folklore. 

Let $T$ be a set of positive integers. For $M \geq 1$,  define $S(M, 2M)$ to be the number of integers in the interval $(M,2M]$ which are elements of $T$. 

\begin{lemma} \label{dyadic}
Let $c_1, \ldots, c_k, d_1, \ldots, d_l, a_1, \ldots, a_k,b_1, \ldots, b_l, u,$ and $v$ be positive constants with $u<v$. Assume that for each $M 
\in (x^u, x^v/2]$  we have 
\begin{equation} \label{SMest}
S(M,2M) \leq c_1M^{a_1} + \cdots + c_kM^{a_k}+d_1M^{-b_1} + \cdots + d_lM^{-b_l}.
\end{equation}
Then, 
$$S(x^u, x^v) \ll c_1x^{a_1 v} + \cdots + c_kx^{a_k v}+d_1x^{-b_1 u} + \cdots + d_lx^{-b_l u}.$$
\end{lemma}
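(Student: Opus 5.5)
We prove Lemma \ref{dyadic} by a dyadic decomposition of $(x^u,x^v]$, summing \eqref{SMest} over the pieces and bounding each resulting sum by a geometric series.

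\emph{The decomposition.} Put $M_j = x^v/2^{j+1}$ for $j=0,1,\ldots,J$. Here $J$ is the least integer with $x^v/2^{J+1} \le x^u$. Then the intervals $(M_j, 2M_j]$ for $0\le j<J$, together with the leftover piece $(x^u, 2M_J]$, cover $(x^u,x^v]$. Hence
$$S(x^u,x^v) \le \sum_{j=0}^{J-1} S(M_j,2M_j) + S(x^u, 2M_J).$$
The leftover piece sits inside $(x^u, 2x^u]$, which is the dyadic interval with $M=x^u$. If one insists on $M>x^u$ strictly, replace it by $(M',2M']$ with $M'$ slightly larger than $x^u$. Either way \eqref{SMest} applies with some $M\in[x^u, x^v/2]$. (If the endpoint $x^u$ itself is excluded from the hypothesis, perturb $M$ slightly; the bound changes only by a constant factor.) So every piece is bounded by the right side of \eqref{SMest} at some $M$ of the form $x^v 2^{-j-1}$, up to constant factors.

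\emph{Summing the positive powers.} For the term $c_iM^{a_i}$ with $a_i>0$,
$$\sum_{j\ge0} c_i (x^v 2^{-j-1})^{a_i} = c_i x^{a_i v}\sum_{j\ge0}2^{-a_i(j+1)} \ll c_i x^{a_i v}.$$
The series is geometric, so the implied constant depends only on $a_i$.

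\emph{Summing the negative powers.} For the term $d_iM^{-b_i}$ with $b_i>0$, reindex from the bottom. Each $M_j$ used satisfies $M_j \ge x^u 2^{J-1-j}$ up to a factor of $2$, since $M_{J-1} > x^u$ and consecutive $M_j$ differ by factors of $2$. Hence
$$\sum_j d_i M_j^{-b_i} \le d_i x^{-b_i u}\sum_{k\ge0} 2^{-b_i k}\cdot O(1) \ll d_i x^{-b_i u}.$$
Adding these estimates over all $i$ gives the claimed bound. The implied constant depends only on the exponents $a_i, b_i$ and the number of terms $k,l$; these are fixed absolute quantities in the application.

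The only point needing care is the endpoint bookkeeping: the last dyadic block must stay within the range where \eqref{SMest} is assumed, and the number of blocks is $\asymp \log x$. This does not matter, because both sums are geometric and so no logarithm is lost. The argument is otherwise routine.
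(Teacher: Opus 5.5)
Your proof is correct and follows the paper's argument, which is only sketched there: split $(x^u,x^v]$ into dyadic blocks, apply \eqref{SMest} to each block, and sum the geometric progressions; you supply the bookkeeping the paper omits. One justification should be corrected: taking $M'$ slightly above $x^u$ for the bottom block works because $S$ counts integers, so $(x^u,M']$ contains no integer once $M'-x^u$ is small, not because the bound changes by a constant factor; you also tacitly need $x^{v-u}>2$ so that the range $(x^u,x^v/2]$ is nonempty, as the paper does.
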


{\it Sketch of a proof.} Split the interval $(x^u, x^v]$ into nonintersecting dyadic intervals. Apply estimate \eqref{SMest} to each dyadic interval and sum the resulting geometric progressions.

Applying Lemma \ref{dyadic} to the estimate \eqref{.17} we obtain $S_2(x^{0.158}, x^{7/39}) \ll_\epsilon x^{8/65 + \epsilon}$. (Among the terms in \eqref{.17} the largest term in this range  is $x^{2/25}M^{6/25}$ when $M$ is near $x^{7/39}$. )

The next range for $M$ is $(x^{7/39}, x^{1/5}]$. Here we use the  estimate \cite[Theorem 2]{HuxTri}.

\begin{lemma} \label{lgM}
Let $f \in {\mathcal F}_2 \cap {\mathcal F}_3$, $1 \leq M \leq T$, $0 \leq \delta \leq \frac{1}{2}C^{-1/2}T^{1/2}M^{-1}$. Then, 
$$\begin{array}{lcl}
|S(f,\delta)| &  \ll & T^{3/10}M^{3/10}(\log M)^{1/2} + T^{4/11}M^{2/11}(\log M)^{5/11}+ \delta^{1/8}T^{3/8}M^{1/4}(\log M)^{5/8}\\
 & &+ \delta^{1/7}T^{1/7}M^{4/7}(\log M)^{5/7}+\delta^{2/5}T^{1/5}M^{3/5}\log M + \delta M.
  \end{array}$$
\end{lemma}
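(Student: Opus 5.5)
Since this estimate is \cite[Theorem~2]{HuxTri}, the plan is to reconstruct its proof along the lines of Section~5, but one order lower. We work with \emph{second} divided differences and quadruples of points in place of third divided differences and quintuples.

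\textbf{Reduction to a spread-out convex set.} First apply Lemma~\ref{conv}. The hypothesis $\delta \le \frac12 C^{-1/2}T^{1/2}M^{-1}$ is exactly $\delta^2 < \frac14\min|f''|$, so $S(f,\delta)=S_1\cup S_2$ with $S_1$ strictly convex (or concave) and $|S_2|\ll \delta M+1$. This accounts for the term $\delta M$. Next remove points lying on long straight major arcs. These are collinear runs, i.e.\ the case $r=2$ of Section~3.2, and Lemma~\ref{Rhest} with $r=2$ bounds them by $M\delta+(\delta M^2/T)^{1/2}$. We also remove points crowded together, using Lemma~\ref{mindist} with a suitable spacing $E$. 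I would then split what remains into blocks of consecutive points. A linear analogue of Lemma~\ref{local}, with lines in place of parabolas, shows that each block either has many collinear points (already removed) or contains a quadruple with no three points collinear. Here, unlike in Section~5, I expect the spacing and grouping losses to be only powers of $\log M$, which is where the logarithms in the statement come from.

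\textbf{Counting quadruples via second divided differences.} For a quadruple $x_1<\dots<x_4$ with $x_4-x_1\le A$, let $D_0,\dots,D_3$ be twice the areas of the four triangles. These are nonzero integers, and Lemmas~\ref{divdif1} and \ref{divdif2} give
\[
|D_i|\le B:=A^3\frac{CT}{M^2}+O(\delta A).
\]
The Swinnerton-Dyer identities hold: $\sum \pm D_i=0$ and $\sum\pm x_iD_i=0$ exactly. Moreover, the quadratic moment $\sum \pm x_i^2D_i$ equals $\det$ of the $4\times4$ matrix with rows $1$, $x$, $x^2$, $y$. This determinant is approximated, through a third divided difference, by $f'''\cdot V$, so it has size $\asymp A^6T/M^3$ up to an error of size $\delta A^3$. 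This approximate third equation is what distinguishes the problem from a pure Diophantine count.

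\textbf{The counting step, the main obstacle.} After translating $z_j=x_{j+1}-x_1$, one must count tuples $(z_1,z_2,z_3,D_1,D_2,D_3)$ with $z_i\le A$, $|D_i|\le B$, satisfying the exact linear relation and the approximate quadratic one. My first attempt would eliminate $z_3$ via the linear relation, exactly as in the proof of Lemma~\ref{nond}. This leaves the quadratic moment as a binary quadratic form in $(z_1,z_2)$ with coefficients built from the $D_i$, whose value is confined to an interval of length $\ll A^6T/M^3+\delta A^3$. The number of representations is then controlled by divisor-type bounds summed on average over the $D_i$, and this averaging is where factors such as $(\log M)^{1/2}$ and $(\log M)^{5/11}$ should enter.

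\textbf{Assembling the bound.} The count of quadruples, together with a preimage bound in the spirit of Lemma~\ref{preim}, gives a bound for $|S'|$ as a sum of powers of $A$, $T/M^3$ and $\delta$. To this we add the quadruples with $x_4-x_1>A$, of which there are $\ll M/A$. Optimising $A$ subject to constraints like \eqref{Abound} should produce the terms $T^{3/10}M^{3/10}$ and $T^{4/11}M^{2/11}$ when $\delta$ is small, and the three $\delta$-dependent terms when the $\delta A^3$ contribution dominates.

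I expect the real difficulty to be the counting step: getting only logarithmic rather than $M^\epsilon$ losses from the representation count. If direct divisor bounds give $M^\epsilon$, I would fall back on a lattice or geometry-of-numbers count of the quadratic form on average over its coefficients.
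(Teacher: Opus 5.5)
The paper does not prove this lemma. It quotes it verbatim as \cite[Theorem~2]{HuxTri}, and its only work is checking that $f_1(k)=x^{1/2}k^{-3/2}$ and $\delta_1=x^{\theta}M^{-3/2}$ satisfy the hypotheses for $M\in(x^{7/39},x^{1/5}]$. You instead try to reconstruct the Huxley--Trifonov argument, and as a proof this has a genuine gap. The standard pieces are fine: Lemma~\ref{conv}, which gives the term $\delta M$; the bound $|D_i|\le B$; and the Swinnerton-Dyer identities. But every step that carries real content is only stated as an expectation. The missing estimate is an upper bound, losing only a fixed power of $\log M$, for the number of tuples $(z_1,z_2,z_3,D_1,D_2,D_3)$ satisfying $z_2D_2=z_1D_1+z_3D_3$ and $|-z_1^2D_1+z_2^2D_2-z_3^2D_3|\ll A^6T/M^3+\delta A^3$. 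You would also need a bound for the number of quadruples above each such tuple, and the final optimisation. Your sketch replaces all three with ``divisor-type bounds summed on average'' and ``should produce''. The statement is rigid here: each of the first five terms has the form $(X(\log M)^5)^{1/k}$. For example, $T^{3/10}M^{3/10}(\log M)^{1/2}=(T^3M^3(\log M)^5)^{1/10}$, $T^{4/11}M^{2/11}(\log M)^{5/11}=(T^4M^2(\log M)^5)^{1/11}$, and $\delta^{2/5}T^{1/5}M^{3/5}\log M=(\delta^2TM^3(\log M)^5)^{1/5}$. This points to a counting estimate with a precise $(\log M)^5$ loss, fed into an inequality solved for $|S(f,\delta)|$. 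Nothing in your proposal determines either the exponents or the log powers.

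The sketch also puts the difficulty in the wrong place. The steps with straight-line major arcs and a linear version of Lemma~\ref{local} are vacuous, because $S_1$ is strictly convex: no three of its points are collinear, so all $D_i\ne0$ automatically. The real degeneracy at this level is vanishing of the quadratic moment, i.e.\ four points of $S(f,\delta)$ on one parabola, which can happen since $\delta>0$. After eliminating $z_3$, two corrections are needed: you must keep the congruence $D_3\mid z_2D_2-z_1D_1$, and the value range is $\ll |D_3|(A^6T/M^3+\delta A^3)$, not $A^6T/M^3+\delta A^3$. The resulting form is $-D_1(D_1+D_3)z_1^2+2D_1D_2z_1z_2+D_2(D_3-D_2)z_2^2$, with discriminant $4D_0D_1D_2D_3$. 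When this is a square, the form factors over $\mathbb Q$ and vanishes along rational lines through the origin. So the number of representations of $0$ in the box grows with $A$, and no divisor bound controls it. Equally spaced runs of points on a rational parabola can also give many quadruples with the same $(z,D)$-data, which spoils a fibre bound of the type in Lemma~\ref{preim}. This case needs its own argument. The ready-made tool in this paper, discarding long quadratic major arcs via Lemma~\ref{Rhest} with $r=3$, is too expensive: it costs $M\delta^{1/2}$. The right-hand side of the lemma does not absorb this. For $T=M$ and $\delta\asymp M^{-1/2}$, which the hypothesis allows, $M\delta^{1/2}\asymp M^{3/4}$, while the largest stated term is $\asymp M^{9/14}(\log M)^{5/7}$.
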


We apply Lemma \ref{lgM} with $f_1(k) = x^{1/2}k^{-3/2}$, $\delta_1 = x^\theta M^{-3/2}$, and $M \in (x^{7/39}, x^{1/5}]$. The conditions of Lemma \ref{lgM} hold and we obtain 
\begin{equation} \label{last}
\begin{array}{lcl}
S_2(M,2M) & \ll & (\log M )\left (  x^{3/20}M^{-3/20} + x^{2/11}M^{-4/11} +  x^{(3+2\theta)/16}M^{-1/2}\right ) \\
 & & +(\log M) \left ( x^{(1+2\theta)/14}M^{1/7} + x^{(1+4\theta)/10}M^{-3/10}+x^{\theta} M^{-1/2} \right ). 
\end{array}
\end{equation}

Combining equation \eqref{last} and Lemma \ref{dyadic} we obtain $S_2\left (x^{7/39},x^{1/5}\right ) \ll (\log M) x^{8/65}$ which completes the proof of the theorem.

\end{proof}

\section{Acknowledgements} The author greatly appreciates the help of D.~R.~Heath-Brown. This paper would not  exist without D.~R.~Heath-Brown who generously shared his new estimate of the number of zeros of ternary quadratic forms in boxes with the author.

\bigskip
\hrule
\bigskip

\noindent 2010 {\it Mathematics Subject Classification}:
Primary 11N25, Secondary 11N37.

\noindent \emph{Keywords: }
squarefull numbers, ternary quadratic forms.

\bigskip
\hrule
\bigskip


\begin{thebibliography}{10}

\bibitem{BatGro}
P.~T.~Bateman and E.~Grosswald, On a theorem of Erd\H{o}s and Szekeres, \emph{Illinois J. Math} \textbf{2} (1958), 88--98.

\bibitem{BombPila}
E.~Bombieri and J.~Pila, On the number of integer points on arcs and ovals, \emph{Duke Math. J.} \textbf{59} (1989), 337--357.

\bibitem{BH2018}
T.~D.~Browning and D.~R.~Heath-Brown, Counting Rational Points on Quadric
Surfaces, \emph{Discrete Analysis},  2018:15.


\bibitem{FilTri}
M.~Filaseta and O.~Trifonov, The distribution of squarefull numbers in short intervals, \emph{Acta Arith.} \textbf{67} (1994), 323--333.

\bibitem{DRHeath-Brown1991}
D.~R.~Heath-Brown, Square-full numbers in short intervals, \emph{Math. Proc. Cambridge Philos. Soc.} \textbf{110}  (1991), 1--3.


\bibitem{HeathBrown2024}
private communication (2024).

\bibitem{Huxley1989}
 M.~N.~Huxley, The integer points close to a curve, \emph{Mathematika} \textbf{36} (1989), 198--215.


\bibitem{HuxSar}
M.~Huxley and P.~Sargos, Points entiers au voisinage d'une courbe plane de classe $C^n$, II, \emph{Functiones et Approximatio} \textbf{XXXV} (2006), 91--115.

\bibitem{HuxTri}
M.~N.~Huxley, The square-full numbers in an interval, \emph{Math. Proc. Cambridge Philos. Soc.} \textbf{119} (1996), 201--208.

\bibitem{IsKe}
E.~Isaacson and H.~B.~Keller, Analysis of Numerical Methods, John Wiley, New York (1966).


\bibitem{CHJia}
C.~H.~Jia, The square-full numbers in short intervals, \emph{Acta Math. Sinica} \textbf{30} (1987), 614--621.


\bibitem{HQLiu}
H.~Q.~Liu, On square-full numbers in short intervals, \emph{Acta Math. Sinica (N.S.)} \textbf{6}  (1990), 148--164.



\bibitem{PShiu1980}
P.~Shiu, On the number of square-full integers between successive squares, \emph{Mathematika} \textbf{27} (1980), 171--178.

\bibitem{PShiu1984}
P.~Shiu, On square-full integers in a short interval, \emph{Glasgow Math. J.} \textbf{25} (1984), 127--134.

\bibitem{PGSchmidt1986}
P.~G.~Schmidt, \"Uber die Anzahl quadratvoller Zahlen in kurzen Intervallen, \emph{Acta Arith.} \textbf{46} (1986), 159--164.

\bibitem{PGSchmidt1988}
P.~G.~Schmidt, Zur Anzahl quadratvoller Zahlen in kurzen Intervallen und ein verwandtes Gitterpunktproblem,  \emph{Acta Arith.} \textbf{50} (1988), 195--201.

\bibitem{Pila}
J.~Pila, Geometric postulation of a smooth function and the number of rational points, \emph{Duke Math. J.} \textbf{63} (1991), 449--463.

\bibitem{SwD1974}
H.~P.~ F.~ Swinnerton-Dyer, The number of lattice points on a convex curve, \emph{J. Number Theory} \textbf{6}
(1974) 128--135.

\bibitem{OTrifonov2002}
O.~Trifonov, Lattice points close to a smooth curve and squarefull numbers in short intervals, \emph{J. London Math. Soc.} (2)  \textbf{65} (2002), 
303--319.



\end{thebibliography}
\end{document}